\newcommand{\arrows}{\longrightarrow}
\newcommand\narrows{
  \mathrel{\mkern2.7mu\not\mkern-2.7mu\longrightarrow}}
\newcommand{\eop}{\bigstar}  
\newcommand{\FF}{{\cal F}}
\newcommand{\PP}{{\cal P}}
\newcommand{\triple}[3]{\langle\kern1pt#1 , \:#2 , 
      \:#3 \kern1pt\rangle }
\newcommand{\trpl}[3]{\langle\kern1pt#1 , \:#2 , 
      \:#3 \kern1pt\rangle }
\def\mathunderaccent#1#2 {\let\theaccent#1\skewfactor#2
\mathpalette\putaccentunder}
\def\putaccentunder#1#2{\oalign{$#1#2$\crcr\hidewidth
\vbox to.2ex{\hbox{$#1\skew\skewfactor\theaccent{}$}\vss}\hidewidth}}
\newcommand{\stevo}{Todor{\v{c}}evi{\'c}}
\newcommand{\erd}{Erd\H{o}s}
\let\ts=\thinspace
\newcommand{\bars}[1]{\lvert#1\rvert}
\newtheorem{Theorem}{Theorem}[section]
\newtheorem{theorem}{Theorem}[section]
\newtheorem{lemma}[Theorem]{Lemma}
\newtheorem{observation}[Theorem]{Observation}
\newenvironment{proof}
{\noindent{\bf Proof.}}{\par\bigskip}
\newtheorem{Definition}[Theorem]{Definition}
\newtheorem{notation}[Theorem]{Notation}
\title{Formalising Ordinal Partition Relations\\ Using Isabelle/HOL}
\begin{document}
\author{Mirna D\v zamonja (\texttt{mdzamonja@irif.fr})\\
     IRIF\\
     CNRS-Université de Paris\\
     6 Rue Nicole-Reine Lepaute\\
75013 Paris, France\\
 \and
Angeliki Koutsoukou-Argyraki 
 (\texttt{ak2110@cam.ac.uk})
 \and
 and Lawrence C. Paulson FRS (\texttt{lp15@cam.ac.uk})\\
Computer Laboratory, University of Cambridge\\
   15 JJ Thomson Avenue, Cambridge CB3 0FD, UK}

\maketitle

\begin{abstract}This is an overview of a formalisation project in the proof assistant Isabelle/HOL of a number of research results in infinitary combinatorics and set theory (more specifically in ordinal partition relations) by Erd\H{o}s--Milner, Specker, Larson and Nash-Williams, leading to Larson's proof of the unpublished result by E.C. Milner 
asserting that for all $m \in \mathbb{N}$, $\omega^\omega\arrows(\omega^\omega, m)$.
This material has been recently formalised by Paulson and is available on the Archive of Formal Proofs; here we discuss some of the most challenging aspects of the formalisation process. 
This project is also a demonstration of working with Zermelo--Fraenkel set theory in higher-order logic.

\end{abstract}

\emph{Keywords}: ordinal partition relations, set theory, interactive theorem proving, Isabelle, proof assistants.

\emph{AMS 2020 subject classes}: 03E02, 03E05, 03E10, 03B35, 68V20, 68V35.

\section{Introduction}\label{sec:intro}

Higher-order logic theorem proving was originally intended for proving the correctness of digital circuit designs. The focus at first was bits and integers. But in 1994, a bug in the Pentium floating point division unit cost Intel nearly half a billion dollars~\cite{nicely-pentium-fdiv}, and verification tools suddenly needed a theory of the real numbers. The formal analysis of more advanced numerical algorithms, e.g.\ for the exponential function~\cite{harrison-exp}, required derivatives, limits, series and other mathematical topics. Later, the desire to verify probabilistic algorithms required a treatment of probability, and therefore of Lebesgue measure and all of its prerequisites~\cite{hoelzl-three}. The more verification engineers wanted to deal with real-world phenomena, the more mathematics they needed to formalise. And it was the spirit of the field to reduce everything to a minimal foundational core rather than working on an axiomatic basis.
  So a great body of mathematics came to be formalised in higher-order logic, including advanced results such as the central limit theorem~\cite{avigad-clt}, the prime number theorem~\cite{harrison-pnt}, the Jordan curve theorem~\cite{hales-jordan-curve} and the proof of the Kepler conjecture~\cite{hales-formal-Kepler}.   
  
 The material we have formalised for this case study \cite{erdos-theorem-partition,Jean,MR2603812} comes from the second half of the 20th century and concerns an entirely unexamined field: infinitary combinatorics and more specifically, ordinal partition relations. This field deals with generalisations of Ramsey's theorem to transfinite ordinals. It was of special interest to the legendary Paul Erd\H{o}s, and it is particularly lacking in intuition, to such an extent that even he could make many errors \cite{erdos-theorem-partition-corr}. Moreover, because our example requires ordinals such as $\omega^\omega$, it is a demonstration of working with Zermelo-Fraenkel set theory in higher-order logic.
  
 From a mathematical point of view, this work has the ambitious aim to be more than a case study of formalisation. We hope that it is a first step in a programme of finding ordinal partition relations by new methods, using the techniques developed in the formalisation. The reader familiar with Ramsey theory on cardinal numbers, including the encyclopaedic
book by Paul \erd{} et~al.\ \cite{erdoshajnalmaterado} or more recent works on the use of partition relations in topology and other celebrated applications, e.g.\ by Stevo \stevo{} \cite{Todorcevicpairs,stevoOCA, MR2603812}, might be doubtful about the need for new methods in discovering partition relations. But none of the powerful set-theoretic methods for studying \emph{cardinal} partition relations apply to \textit{ordinal} partition relations. The difficulty is that in addition to the requirement on the monochromatic set to have a given size, which we would ask of a cardinal partition relation, the ordinal case also has the requirement of preserving the order structure through having a fixed order type. In fact, the difference between the order structure versus an unstructured set shows up already in the case of addition: the addition of infinite cardinals is trivial, whereas with ordinals we don't even have $1+\alpha=\alpha+1$. Ordinal partition relations are the first instance of structural Ramsey theory, which is a growing and complex area of combinatorics. 

The fact is that everything we know about ordinal partition relations---which is short enough to be reviewed in our \S\ref{sec:ordinal_partitions}---has been proven painfully and laboriously. Ingenious constructions by several authors since the 1950s have chipped the edges off the most important problem in the subject, which is to characterise the countable ordinals $\alpha$ such that
$\alpha\arrows(\alpha, m)$ for a given natural number $m$ (for the notation see \S\ref{sec:ordinal_partitions}). The simplest nontrivial case of $m=3$ is the subject of a 
\$1000 open problem of Erd\H{o}s, posed back in 1987, see \S\ref{sec:ordinal_partitions}. 

We may ask why it is that modern set theory is so silent on the subject of ordinal partitions. Perhaps it is the case of the chicken and the egg. In the case of cardinal numbers, whose study has been at the heart of almost everything done in set theory since the time of Cantor, one of the first important advances was exactly the understanding of partition relations. They are the
backbone of infinite combinatorics: many theorems in set theory can be formulated in terms of partitions, an attitude well supported by the work of \stevo{} cited above. So to better understand the ordinals, we \emph{first} have to understand their partition properties, rather than expecting that powerful general methods will be developed first and then yield an understanding of ordinal partitions. The most interesting problems about ordinal partitions are about countable ordinals, while modern combinatorial set theory really only starts at the first uncountable cardinal. Even the popular method of using countable elementary submodels is only useful if one applies it to cardinals, since the intersection of a countable elementary submodel $M$ with the ordinals is a countable highly indecomposable limit $\delta$, such that $\delta$ is actually a subset of $M$ and $\delta=\omega_1^M$. So $M$ reflects everything about ordinals below $\delta$ and nothing about the ones above, not giving any room for reflection arguments that elementary submodels are used for. Consequently, it cannot be used to argue about countable ordinals, as much as it can be used to argue about 
$\omega_1$. 

The notation in the paper is quite standard, while every new notion is explained in the relevant part of the paper. Throughout we use the identification of the ordinal $\omega$ with the set of natural numbers and of each natural number $n>0$ with the set of
its predecessors $\{0,1,\ldots, n\}$. This is also equal to $\omega\setminus (n+1)$.

The plan of this paper is as follows: in the next section we give a short but comprehensive introduction to ordinal partition relations; in Section\ts \ref{sec:theorems}, we present the material formalised including  brief sketches of 
Larson's proofs of partition theorems for $\omega^2$ and $\omega^\omega$; in Section\ts\ref{sec:nw}, we give a more detailed exposition on the Nash-Williams partition theorem including two different proofs; in Section\ts\ref{sec:isabelle}, we present an introduction to the Isabelle/HOL theorem prover; Section\ts\ref{sec:formalisation-nw} presents our formalisation of the Nash-Williams theorem; Section\ts\ref{sec:formalisation-jean} discusses the formalisation of Larson's proofs; finally, Section\ts\ref{sec:conclusion} summarises what we learned from this project.

\section{Ordinal partition relations}\label{sec:ordinal_partitions}

As a side lemma in his work on decidability, Frank Ramsey \cite{Ramseyth} in 1929 proved what we now call Ramsey's theorem. It states that for any two natural numbers $m$ and $n$, if we divide the unordered $m$-tuples of an infinite set $A$ into $n$ pieces, there will be an infinite subset $B$ of $A$ whose all unordered $m$-tuples are in the same piece of the division. This theorem has since been generalised in many directions and Ramsey theory now forms an important part of combinatorics, both in the finite and in the infinite case. We shall only discuss Ramsey theory of cardinal and ordinal numbers, although a vast theory exists extending Ramsey theory to various structures, see for example the work of \stevo{} \cite{stevoOCA,MR2603812}. Also, we shall only be interested in partitions of pairs of ordinals, as it already proves to be quite challenging. In this section we review what is known about such ordinal partitions.

It is convenient to introduce the notation coming from \erd{} and his school, for example in \textit{Combinatorial Set Theory} \cite{erdoshajnalmaterado}. Writing
\begin{equation}\label{def:ordpart}
\alpha\arrows (\beta, \gamma)
\end{equation}
means that for every partition of the set $[\alpha]^2$ of unordered pairs of elements of $\alpha$ into two parts (called colours, say 0 and 1), there is either a subset $B$ of $\alpha$ of order type $\beta$ whose pairs are all coloured by 0,
or a subset $C$ of $\alpha$ of order type $\gamma$ whose pairs are all coloured by 1. Such a $B$ is said to be
\textit{0-monochromatic}, while $C$ is \textit{1-monochromatic}. The notation implies that the situation is trivial unless $\beta$, $\gamma\le\alpha$, as we shall assume. Note also that for all ordinals, the following rules of monotonicity hold:
\[
\text{if } \alpha\arrows (\beta, \gamma)\text{ then } \alpha'\arrows (\beta', \gamma')
\text { for } \alpha'\ge\alpha,\, \beta'\le\beta,\, \gamma'\le\gamma.
\]
Some authors, such as Larson~\cite{Jean}, use the notation $\alpha\arrows (\beta, \gamma)^2$ to emphasise that it is pairs that are coloured, but since we shall only ever work with pairs, we omit the superscript. The negation of $\alpha\arrows (\beta, \gamma)$ is written $\alpha\narrows (\beta, \gamma)$.

It turns out that finding the triples $\alpha,\beta,\gamma$ for which
the relation (\ref{def:ordpart}) holds is highly non-trivial. It is even non-trivial when all of the ordinals
$\alpha,\beta,\gamma$ are countable, which is the case to which we shall restrict our attention. On the other hand, for
$\alpha\le\omega$ the situation is already understood by the classical Ramsey theorem, so we shall assume
$\alpha>\omega$.
We should also assume that $\gamma$ is finite \cite[p.177]{Hajnal-Larson} (note that all arithmetic in the paper is ordinal arithmetic):

\begin{observation}\label{gammafinite} For any ordinal $\alpha>\omega$ we have $\alpha\narrows (|\alpha|+1, \omega)$.
\end{observation}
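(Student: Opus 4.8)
The plan is to exhibit a single colouring that witnesses the negation, i.e.\ a function $c\colon[\alpha]^2\to\{0,1\}$ admitting no $0$-monochromatic set of order type $|\alpha|+1$ and no $1$-monochromatic set of order type $\omega$. The idea is to play the natural ordering $<$ of $\alpha$ off against an auxiliary well-ordering of the same underlying set whose order type is as small as possible, namely the cardinal $|\alpha|$. Concretely, I would fix a bijection between $\alpha$ and $|\alpha|$ and let $\prec$ be the well-ordering of the set $\alpha$ that it induces, so that $\otp{\alpha,\prec}=|\alpha|$. For a pair with $x<y$ I set $c(\{x,y\})=0$ when $\prec$ agrees with $<$ on the pair (that is, $x\prec y$) and $c(\{x,y\})=1$ when they disagree (that is, $y\prec x$).

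Next I would read off what the two kinds of monochromatic set must look like. If $B$ is $0$-monochromatic, then $<$ and $\prec$ coincide on $B$, so $(B,<)$ and $(B,\prec)$ are literally the same ordered set; since $(B,\prec)$ is a suborder of $(\alpha,\prec)$, its order type is at most $|\alpha|$, whence $\otp{B,<}\le|\alpha|<|\alpha|+1$. Thus no $0$-monochromatic set reaches order type $|\alpha|+1$. If instead $C$ is $1$-monochromatic, then on $C$ the ordering $\prec$ is exactly the reverse of $<$; were $C$ infinite it would contain an $<$-increasing sequence $c_0<c_1<c_2<\cdots$, which would be a $\prec$-\emph{descending} sequence, contradicting the well-foundedness of $\prec$. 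Hence every $1$-monochromatic set is finite, and in particular none has order type $\omega$.

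Together these two observations show that $c$ witnesses $\alpha\narrows(|\alpha|+1,\omega)$. The only external facts used are that the order type of a subset of a well-ordered set does not exceed that of the whole, and that a well-order has no infinite descending sequence; both are standard and hold uniformly, so no case distinction (for instance on whether $\alpha$ is itself a cardinal) is needed. There is no genuine obstacle in the argument: its entire content is the choice of the auxiliary order $\prec$ of \emph{minimal} type $|\alpha|$, which simultaneously caps the length of order-preserving (colour $0$) subsets at $|\alpha|$ and forbids infinite order-reversing (colour $1$) subsets. The point worth stressing, and the reason for stating the observation, is that it is precisely the second colour that is driven down to \emph{finite} size, which is what motivates restricting attention to finite $\gamma$ in the remainder of the section.
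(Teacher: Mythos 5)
Your proof is correct and is essentially the paper's own argument: the same colouring via a bijection $\pi\colon\alpha\to|\alpha|$ (colour recording whether the two orders agree or disagree on the pair), with the same two observations that a $0$-monochromatic set has order type at most $|\alpha|$ and a $1$-monochromatic set of type $\omega$ would yield an infinite descending sequence of ordinals.
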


By definition we have $\alpha\arrows(\alpha,2)$ for any $\alpha$, so the first nontrivial case is the following question, to which \erd{} attached a prize of \$1000 in 1987~\cite{Erdoslist}:

\bigskip\noindent
\bf{\erd's problem} \rm Characterise the set of all countable ordinals $\alpha$ such that $\alpha\arrows(\alpha,3)$.
\bigskip

\noindent
This problem is still very much open. In fact, the more general problem of characterising countable ordinals $\alpha$ and natural numbers $m$ such that 
\[
\alpha\arrows (\alpha, m)
\]
holds was asked already by Erd\H{o}s and Richard Rado in 1956 \cite{ErdosRado} and it was the slow progress on it that made \erd{} reiterate the simplest case of this problem in his 1987 problem list \cite{Erdoslist}.

The first progress towards solving \erd's problem came from Ernst Specker \cite{Speckerord}, whose results were continued by  Chen-Chung Chang \cite{Changord}. Chang gave a very involved proof of
$\omega^\omega\arrows(\omega^\omega, 3)$ and gained \$250 from \erd. In an unpublished manuscript, Eric Milner improved Chang's result to say that
\[
\omega^\omega\arrows(\omega^\omega, m)
\]
for all natural numbers~$m$. The main proof that we have formalised is Jean Larson's proof of Milner's result. Comparing her paper~\cite{Jean} with earlier proofs explains why the paper is called `A short proof \ldots ', but it is not a short proof and formalising it was a challenge.

It turns out that the behaviour of countable ordinals with respect to partition relations is influenced by their Cantor Normal Form, so we take the opportunity to remind the reader of that concept.

\begin{theorem}[Cantor Normal Form]\label{Cantonormal} Every ordinal number can be written in a unique way as an ordinal sum of the form
\[
\omega^{\beta_0}\cdot m_0 + \omega^{\beta_1}\cdot m_1 +\ldots  \omega^{\beta_n}\cdot m_n,
\]
where $n$ is non-negative integer, as are the~$m_i$ for $i\le n$, and $\beta_0>\beta_1>\ldots\beta_n\ge 0$ are ordinals.
\end{theorem}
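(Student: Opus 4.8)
The plan is to establish existence and uniqueness together by transfinite induction on $\alpha$, with ordinal arithmetic doing the real work. The base case $\alpha=0$ is the empty sum ($n=0$, $m_0=0$), so I assume $\alpha>0$ and that every ordinal below $\alpha$ already has a unique representation of the stated shape, with the coefficients $m_i$ taken \emph{positive}: positivity is what makes uniqueness meaningful, since a vanishing coefficient could always be deleted.

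For existence I would first extract the leading term. The map $\beta\mapsto\omega^\beta$ is strictly increasing and continuous and satisfies $\omega^\beta\ge\beta$; hence $\{\beta:\omega^\beta\le\alpha\}$ is a nonempty subset of $\alpha+1$ whose supremum, by continuity, again lies in it, so it has a maximum $\beta_0$. By maximality $\omega^{\beta_0}\le\alpha<\omega^{\beta_0+1}=\omega^{\beta_0}\cdot\omega$. I then invoke the ordinal division algorithm at $\gamma=\omega^{\beta_0}$: there are unique $\delta,\rho$ with $\alpha=\omega^{\beta_0}\cdot\delta+\rho$ and $\rho<\omega^{\beta_0}$. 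Were $\delta\ge\omega$ we would get $\omega^{\beta_0}\cdot\delta\ge\omega^{\beta_0+1}>\alpha$, so $\delta$ is a positive natural number $m_0$. Since $\rho<\omega^{\beta_0}\le\alpha$, the induction hypothesis gives a normal form for $\rho$ whose leading exponent is necessarily $<\beta_0$ (otherwise $\rho\ge\omega^{\beta_0}$); prefixing $\omega^{\beta_0}\cdot m_0$ produces a representation of $\alpha$ with strictly decreasing exponents.

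For uniqueness the key is an absorption estimate. Using $\omega^{\beta'}+\omega^{\beta}=\omega^{\beta}$ whenever $\beta'<\beta$---itself proved by writing $\beta=\beta'+\xi$ with $\xi\ge1$ and noting $1+\omega^\xi=\omega^\xi$---one shows by induction on the length of the tail that
\[
\omega^{\beta_1}\cdot m_1+\cdots+\omega^{\beta_n}\cdot m_n<\omega^{\beta_1+1}\le\omega^{\beta_0}
\]
whenever $\beta_0>\beta_1>\cdots>\beta_n$ and the $m_i$ are positive. Thus any normal form of $\alpha$ lies in the interval $[\,\omega^{\beta_0}\cdot m_0,\ \omega^{\beta_0}\cdot m_0+\omega^{\beta_0})=[\,\omega^{\beta_0}\cdot m_0,\ \omega^{\beta_0}\cdot(m_0+1))$, so its leading data $\beta_0,m_0$ coincide with the exponent and quotient returned by the division algorithm and are therefore determined by $\alpha$ alone. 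Cancelling the leading term on the left reduces the problem to the remainder $\rho<\omega^{\beta_0}$, and the induction hypothesis forces the remaining $\beta_i$ and $m_i$ to agree.

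The main obstacle is not this inductive skeleton but the library of ordinal-arithmetic lemmas it stands on: that $\beta\mapsto\omega^\beta$ is strictly monotone, continuous and dominates the identity (needed to locate $\beta_0$), the existence and uniqueness of quotient and remainder in the division algorithm together with the fact that the quotient here is forced to be finite, and the absorption law $\omega^{\beta'}+\omega^{\beta}=\omega^{\beta}$ for $\beta'<\beta$. Informally each is routine, but---as is typical in a formalisation---every one rests on its own transfinite induction and on the right-continuity of ordinal $+$, $\cdot$ and exponentiation, so the genuine effort lies in assembling these facts rather than in the head induction itself.
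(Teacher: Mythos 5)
Your proof is correct and is the standard argument (extract the maximal $\beta_0$ with $\omega^{\beta_0}\le\alpha$, divide by $\omega^{\beta_0}$, and use the absorption law $\omega^{\beta'}+\omega^{\beta}=\omega^{\beta}$ for $\beta'<\beta$ to bound the tail and force uniqueness); the paper itself states Cantor Normal Form only as a recalled classical fact and gives no proof, so there is nothing to compare against. Your one deviation from the statement as printed --- requiring the coefficients $m_i$ to be positive --- is in fact a necessary correction, since with merely non-negative coefficients (as the paper literally says) uniqueness fails trivially by appending zero terms.
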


The state of the art regarding the known positive instances of \erd's problem is the following theorem of Rene Schipperrus from his 1999 Ph.D. thesis \cite{Schipperusthesis}, published many years later, in 2010, as a journal paper \cite{Schipperusjournal}:

\begin{theorem}[Schipperus 1999] Suppose that $\beta$ is a countable ordinal whose Cantor Normal Form has at most two summands. Then
\[
\omega^{\omega^\beta}\arrows(\omega^{\omega^\beta}, 3).
\]
\end{theorem}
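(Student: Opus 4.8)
The plan is to show that whenever a colouring $c:[\omega^{\omega^\beta}]^2\to\{0,1\}$ admits no $1$-monochromatic triple, it must admit a $0$-monochromatic set of the full order type $\omega^{\omega^\beta}$; this is exactly what the relation $\omega^{\omega^\beta}\arrows(\omega^{\omega^\beta},3)$ asserts. The hypothesis of no $1$-triangle is the engine of the whole argument: for each point $x$, writing $N_1(x)=\{y: c\{x,y\}=1\}$, triangle-freeness forces $N_1(x)$ to be itself $0$-monochromatic, since a $1$-coloured pair inside $N_1(x)$ would close up a $1$-triangle with $x$. In particular, if some $N_1(x)$ already has order type $\omega^{\omega^\beta}$ we are immediately done, so we may assume every $1$-neighbourhood is small, i.e.\ of order type $<\omega^{\omega^\beta}$. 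The task then becomes an order-preserving independent-set construction inside a sparse $1$-graph.

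First I would fix a concrete combinatorial representation of the ordinals below $\omega^{\omega^\beta}$, coding each one by a finite tree (or nested sequence) obtained by iterating the Cantor Normal Form of Theorem~\ref{Cantonormal}: the exponents appearing in the CNF lie below $\omega^\beta$, and the two-summand hypothesis on $\beta$ bounds how those exponents themselves decompose, so that the relevant trees have uniformly controlled shape. This is the analogue of the sequence coding used by Larson for $\omega^\omega$ (the case $\beta=1$), and it is what lets one speak of the level and splitting pattern of an ordinal and run combinatorial arguments on the codes rather than on the ordinals directly.

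The core of the proof is then a recursion on $\beta$ that builds the desired $0$-monochromatic set as a transfinite concatenation $\sum_{\xi} B_\xi$ of blocks, each $B_\xi$ being a $0$-monochromatic set of smaller type produced by the inductive hypothesis, arranged so that $c$ is constantly $0$ also on pairs straddling different blocks. To arrange this cross-block homogeneity I would pass to an end-homogeneous or canonical subcolouring, in the spirit of the \erd--Rado canonical Ramsey theorem and the Nash-Williams partition theorem discussed in \S\ref{sec:nw}, thinning the index set repeatedly while exploiting the additive indecomposability of $\omega^{\omega^\beta}$ to keep the accumulated order type from dropping. Because each $1$-neighbourhood is small, at every stage enough of the ambient order type survives to continue, and the blocks can be chosen so that their sum again reaches exactly $\omega^{\omega^\beta}$.

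The main obstacle---and the reason this is a whole thesis rather than a short lemma---is precisely the bookkeeping that guarantees the concatenated blocks realise the \emph{exact} order type $\omega^{\omega^\beta}$ while remaining $0$-monochromatic across blocks. Preserving an order type, as opposed to merely a cardinality, is what distinguishes ordinal from cardinal partition relations, and it forces the canonicalisation step to be carried out with great care about which initial segments of the codes are retained. This is also exactly where the two-summand restriction is used: it keeps the tree of codes simple enough that the canonical forms stabilise, whereas for CNFs of $\beta$ with three or more summands the combinatorics is not known to close up---which is why the general instance of \erd's problem remains open.
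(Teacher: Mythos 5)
The paper does not prove this theorem: it is quoted as the state of the art, with the proof residing in Schipperus's Ph.D.\ thesis and a twenty-page sketch in the Hajnal--Larson survey, so there is no in-paper argument to compare yours against. Judged on its own terms, your proposal is a strategy outline rather than a proof, and its central reduction is demonstrably insufficient. You reduce to the situation where the colouring has no $1$-monochromatic triple and every $1$-neighbourhood $N_1(x)$ has order type $<\omega^{\omega^\beta}$, and then assert that ``because each $1$-neighbourhood is small, at every stage enough of the ambient order type survives to continue.'' But Galvin's negative result, cited in \S\ref{sec:ordinal_partitions}, shows that $\omega^\gamma\narrows(\omega^\gamma,3)$ whenever $\gamma\ge 2$ is not a power of $\omega$: the witnessing colouring has no $1$-triangle (so every $N_1(x)$ is $0$-monochromatic and hence of smaller type) and yet admits no $0$-monochromatic set of full type. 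So a ``sparse triangle-free $1$-graph'' on an additively indecomposable ordinal can still defeat the construction you describe; the hypothesis that the exponent is $\omega^\beta$ (and that $\beta$ has at most two CNF summands) must enter through some concrete combinatorial mechanism, not merely through bookkeeping, and your sketch never identifies that mechanism.

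Relatedly, the inductive architecture is not pinned down. A ``recursion on $\beta$'' producing blocks $B_\xi$ of smaller type via the inductive hypothesis would at best use instances $\omega^{\omega^{\beta'}}\arrows(\omega^{\omega^{\beta'}},3)$ for $\beta'<\beta$, but $\omega^{\omega^\beta}$ is not a sum of $\omega^{\omega^\beta}$-many such blocks in any way that makes cross-block $0$-homogeneity follow from an ``end-homogeneous or canonical subcolouring'': canonical-Ramsey and Nash-Williams thinning arguments discard order type wholesale (this is exactly the cardinal-versus-ordinal distinction the paper emphasises, and exactly why Observation~\ref{gammafinite} holds). The actual proof requires a bespoke coding of the ordinals below $\omega^{\omega^\beta}$ by node-labelled trees together with a delicate analysis of uniform fronts on them, occupying an entire thesis; every step you label as ``bookkeeping'' is where that content lives. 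As written, the proposal contains the standard first page of any such argument and leaves the theorem itself unproved.
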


The delay between the thesis and the paper is indicative of the difficulty of the proof and the process of checking its correctness. A sketch of Schipperus' proof, divided in seven subsections, is given on pages 188--209 of the excellent survey article \cite{Hajnal-Larson} by Andr\' as Hajnal and Larson. The reason that Schipperus focused on ordinals of the type
$\omega^{\omega^\beta}$ is that if the ordinal $\alpha$ is not a power of $\omega$ then it cannot satisfy
$\alpha\arrows(\alpha,3)$, as shown in the following Observation \ref{decomposable}. Hence, only the powers of $\omega$ are of interest.
Fred Galvin showed~\cite{Galvin} that for an ordinal of the form $\alpha=\omega^\beta$ where $\beta\ge 2$ is not itself a power of
$\omega$, we have
$\alpha\narrows(\alpha,3)$. Hence Schipperus' choice of ordinals. Still open is the case of $\alpha=\omega^{\omega^\beta}$
where $\beta$ has at least three summands in its Cantor Normal Form.

\begin{observation}\label{decomposable} Suppose that $\alpha$ is an ordinal which is not a power of $\omega$. Then
$\alpha\narrows(\alpha,3)$.
\end{observation}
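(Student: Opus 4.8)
The plan is to reduce the statement to a short combinatorial construction via the Cantor Normal Form. First I would observe that a positive ordinal is a power of $\omega$ precisely when it is additively indecomposable; equivalently, by Theorem~\ref{Cantonormal}, $\alpha$ fails to be a power of $\omega$ exactly when its Cantor Normal Form
\[
\alpha = \omega^{\beta_0}\cdot m_0 + \omega^{\beta_1}\cdot m_1 + \ldots + \omega^{\beta_n}\cdot m_n
\]
either has more than one summand ($n \geq 1$) or has a single summand with coefficient $m_0 \geq 2$. In either case I can split off a proper tail: taking $\delta$ to be the sum of all but the last term (or $\delta = \omega^{\beta_0}$ when $n = 0$) and $\rho$ the remaining part, I obtain $\alpha = \delta + \rho$ with $0 < \delta < \alpha$ and $0 < \rho < \alpha$. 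Thus the essential hypothesis is that $\alpha$ is \emph{decomposable}.

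Given such a decomposition, I would define a colouring witnessing $\alpha \narrows (\alpha, 3)$ as follows. Split $\alpha$ into the initial segment $I = \{\xi : \xi < \delta\}$, of order type $\delta$, and the final segment $F = \{\xi : \delta \leq \xi < \alpha\}$, of order type $\rho$. For a pair $\{i, j\}$ I set $c(i,j) = 1$ when the two ordinals lie on opposite sides of the cut (one in $I$ and one in $F$), and $c(i,j) = 0$ when they lie on the same side. In other words, colour $1$ is exactly the complete bipartite graph between $I$ and $F$.

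It then remains to check the two failures. Since colour $1$ is bipartite, among any three ordinals two must lie in the same block, and the pair they form is coloured $0$; hence there is no $1$-monochromatic triple, i.e.\ no $1$-monochromatic set of order type $3$. For colour $0$, any $0$-monochromatic set $B$ can contain no cross pair, so $B \subseteq I$ or $B \subseteq F$, whence $\otp{B} \leq \delta < \alpha$ or $\otp{B} \leq \rho < \alpha$; in particular $\otp{B} \neq \alpha$. Therefore neither a $0$-monochromatic set of order type $\alpha$ nor a $1$-monochromatic set of order type $3$ exists, and $c$ witnesses $\alpha \narrows (\alpha, 3)$.

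The combinatorial heart of the argument is thus almost trivial once the decomposition is in hand; the real work---and the step I expect to be the main obstacle, especially in a formalisation---is the ordinal-arithmetic lemma that a positive ordinal is a power of $\omega$ if and only if it is additively indecomposable, together with the explicit splitting $\alpha = \delta + \rho$ extracted from the Cantor Normal Form. As usual we take $\alpha > \omega$, so that $\alpha$ is in particular positive and the decomposition is genuine.
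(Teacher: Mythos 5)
Your proposal is correct and follows essentially the same route as the paper: extract a decomposition $\alpha=\delta+\rho$ with $\delta,\rho<\alpha$ from the Cantor Normal Form, colour cross pairs $1$ and same-side pairs $0$, and observe that colour $1$ is bipartite (so no monochromatic triple) while any $0$-monochromatic set sits inside one block (so has order type $<\alpha$). The only difference is that you spell out the extraction of the splitting from the CNF in slightly more detail, which is a reasonable elaboration rather than a different argument.
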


\begin{proof} It follows from the Cantor Normal Form that any $\alpha$ which is not a power of $\omega$ is additively decomposable: there exist ordinals $\beta$, $\gamma<\alpha$ such that $\alpha=\beta+\gamma$.
Fixing such $\beta$ and $\gamma$, we define $c$ on $[\alpha]^2$ by letting $c(x,y)=0$ if either $x$, $y<\beta$ or
$x$, $y\ge \beta$. Otherwise, we let $c(x,y)=1$.

Then it suffices to note that any 0-monochromatic subset of $\alpha$ is either contained in $\beta$ or in $[\beta,\beta+\gamma)$ and hence has the order type at most $\max(\beta,\gamma)$, which is strictly less than $\alpha$. On the other hand, if we have distinct $x$, $y$, $z<\alpha$, at least two of them will be $<\beta$ or at least two of them will be $\ge\beta$ and in either case, the set they form will get mapped to~0 by $c$. Hence the set $\{x,y,z\}$ is not 1-monochromatic.
$\eop_{\ref{decomposable}}$
\end{proof}

In this section, we have mostly concentrated on the result we formalised and \erd's problem. Information on some additional instances of $\alpha\arrows(\alpha,m)$ for $m>3$ can be found in the Hajnal-Larson paper \cite{Hajnal-Larson}.

\section{Theorems formalised}\label{sec:theorems}

The ultimate objective of the project was to formalise Larson's proof \cite{Jean} of the following unpublished result by E.C. Milner:
\begin{theorem}\label{larsonresult}
For all $m \in \mathbb{N}$, $\omega^\omega\arrows(\omega^\omega, m)$.
\end{theorem}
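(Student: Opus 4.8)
The plan is to induct on $m$, the base cases $m\le 2$ being immediate: $\omega^\omega\arrows(\omega^\omega,0)$ and $\omega^\omega\arrows(\omega^\omega,1)$ are vacuous (the empty set and any singleton are trivially $1$-monochromatic), and $\omega^\omega\arrows(\omega^\omega,2)$ is the trivial relation $\alpha\arrows(\alpha,2)$ noted after Observation~\ref{gammafinite}. So fix $m\ge 2$, assume $\omega^\omega\arrows(\omega^\omega,m)$, and let $c\colon[\omega^\omega]^2\to\{0,1\}$ be a colouring admitting no $1$-monochromatic set of size $m+1$; I must produce a $0$-monochromatic subset of order type $\omega^\omega$.

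The inductive hypothesis is exploited through the forward $1$-neighbourhoods $N_1(x)=\{\,y>x : c(\{x,y\})=1\,\}$. If some $x$ has $N_1(x)$ of order type $\omega^\omega$, then $N_1(x)$ is order-isomorphic to $\omega^\omega$, so the hypothesis $\omega^\omega\arrows(\omega^\omega,m)$ applied to $c$ restricted there yields either a $0$-monochromatic set of type $\omega^\omega$ (and we are done) or a $1$-monochromatic $m$-set inside $N_1(x)$; the latter, together with $x$, is a $1$-monochromatic $(m+1)$-set, contradicting our assumption. Hence we may assume the remaining, genuinely hard case: every point has a \emph{small} forward $1$-neighbourhood, of order type strictly below $\omega^\omega$, and thus bounded by some $\omega^{n}$. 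The task becomes to construct a $0$-monochromatic copy of $\omega^\omega$ out of a colouring in which the $1$-edges are sparse in this ordinal sense.

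For that construction I would coordinatise $\omega^\omega$ combinatorially via the Cantor Normal Form (Theorem~\ref{Cantonormal}): each $\xi<\omega^\omega$ is uniquely $\omega^{k_0}+\cdots+\omega^{k_{r-1}}$ with $k_0\ge\cdots\ge k_{r-1}\ge 0$, so points of $\omega^\omega$ become finite non-increasing sequences of naturals and $c$ becomes a colouring of pairs of such sequences. This is precisely the setting of the Nash-Williams partition theorem developed in Section~\ref{sec:nw}, which I would use to thin the ground set until the colouring is \emph{canonical}, its value on a pair depending only on the combinatorial pattern of the two codes. Building the target set then proceeds level by level along $\omega^\omega=\sup_n\omega^n$: having assembled a $0$-homogeneous block of type $\omega^n$, the smallness of the $1$-neighbourhoods together with the canonicity lets one append a further block of type $\omega^{n+1}$ all of whose internal pairs, and all pairs joining it to the earlier blocks, are coloured $0$; consecutive blocks of strictly increasing exponent then sum to $\sup_n\omega^n=\omega^\omega$.

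The step I expect to be the main obstacle is guaranteeing that the assembled set has order type \emph{exactly} $\omega^\omega$ rather than merely some large countable ordinal. As the introduction stresses, this order-structure requirement is exactly what separates ordinal from cardinal partition relations, and it is unforgiving: the blocks must be stacked so that their Cantor Normal Forms align into a single sum whose exponents are unbounded yet which closes off at $\omega^\omega$. Managing this bookkeeping --- choosing codes so that the successive $0$-homogeneous blocks realise every finite exponent cofinally, and checking that each Nash-Williams homogenisation survives being restricted to the next block --- is the delicate heart of the argument, and it is what makes Larson's ``short'' proof nonetheless intricate.
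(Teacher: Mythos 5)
Your opening dichotomy is sound as far as it goes: if some forward $1$-neighbourhood $N_1(x)$ has order type $\omega^\omega$, the inductive hypothesis does close that case exactly as you say. But the case it leaves behind is not a reduced problem, and your sketch of it omits the two load-bearing components of the actual argument. First, the Nash-Williams theorem (Theorem~\ref{nashwilliams}) colours \emph{single} finite subsets of $\omega$ drawn from a thin family; it says nothing about colourings of \emph{pairs}. To invoke it you must encode each pair $\{x,y\}$ by one finite sequence whose family is thin --- this is precisely the interaction scheme of Definition~\ref{def:inter}, and it exists only when the two codes have \emph{different} lengths and can be interleaved. Pairs of codes of equal length (``form $0$'') cannot be canonicalised this way at all; Larson handles them beforehand with the Erd\H{o}s--Milner theorem (Theorem~\ref{erdosmilner}), via $\omega^{n\cdot m}\arrows(\omega^n,m)$ applied level by level, and your proposal never addresses them. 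Smallness of the $1$-neighbourhoods does not substitute: on $\omega^2$, colouring a pair $1$ exactly when both points lie in the same copy of $\omega$ gives every forward $1$-neighbourhood order type $\omega$, yet every $0$-homogeneous set has order type at most $\omega$. So a genuine positive partition input for the same-length pairs is unavoidable.

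Second, even after canonicalisation --- a constant colour $j_l$ for each form $l>0$ --- nothing in your sketch forces $j_l=0$. The hypothesis ``no $1$-monochromatic $(m+1)$-set'' is converted into $j_l=0$ by exhibiting, for every $l$, an $(m+1)$-element set all of whose pairs have form $l$ with interaction scheme inside the thinned set $N$; this is Larson's Lemma~3.7, the intricate $d^i$, $a^i_j$ construction described in Section~\ref{sec:prelim-jean}, and it has no analogue in your outline. Your intended substitute --- that no single $x$ has a large $1$-neighbourhood --- would require realising a colour-$1$ pattern with one fixed first coordinate and $\omega^\omega$-many second coordinates, a different and unestablished claim; note also that your bound ``order type below $\omega^{n}$'' is non-uniform in $x$ and cannot be made uniform by pigeonhole, since $\omega^\omega$ is a countable union of sets of strictly smaller order type. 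Finally, the step you rightly flag as delicate --- stacking $0$-homogeneous blocks so that the union has order type exactly $\omega^\omega$ while every cross-pair also has a controlled form --- is Larson's Lemma~3.8, the longest part of the whole development. In short, the induction on $m$ is a valid but inessential wrapper; the ``hard case'' it isolates is the entire theorem, and for it the proposal supplies a plan rather than a proof.
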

While working towards that objective, many set-theoretic prerequisites had to be formalised, notably Cantor Normal Form, indecomposable ordinals and many elementary properties of order types. Her paper contains, as a simpler example of the methods she employed, a proof of Specker's theorem \cite{Speckerord}: 
\begin{theorem}[Specker]\label{specker} For all $m<\omega$, $\omega^2\arrows(\omega^2, m)$\end{theorem}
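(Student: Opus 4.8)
The plan is to argue the contrapositive in graph-theoretic language. Fix a colouring $c:[\omega^2]^2\to\{0,1\}$ and suppose there is no $1$-monochromatic set of size $m$; I must produce a $0$-monochromatic set of order type $\omega^2$. Reading the colour-$1$ pairs as the edges of a graph $G$ on $\omega^2$, the hypothesis says exactly that $G$ is $K_m$-free, and a $0$-monochromatic set is precisely an independent set of $G$. So the statement becomes: \emph{every $K_m$-free graph on $\omega^2$ has an independent set of order type $\omega^2$}. The base case $m=2$ is the trivial relation $\omega^2\arrows(\omega^2,2)$ noted in \S\ref{sec:ordinal_partitions}, since a $K_2$-free graph has no edges at all, so the whole of $\omega^2$ is independent. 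I would then proceed by induction on $m$.

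First I would exploit the block decomposition $\omega^2=\bigsqcup_{n<\omega}B_n$ with $B_n=[\omega n,\omega(n+1))$, each of order type $\omega$. Restricting $G$ to a single block gives a $K_m$-free graph on a set of order type $\omega$, so Ramsey's theorem yields an infinite independent subset of each block; this supplies the ``first dimension'' of the target, namely order type $\omega$ inside a block. The natural engine for the induction is a dichotomy on the order types of neighbourhoods: writing $N^+(v)$ for the neighbours of $v$ lying above $v$, the subgraph induced on $N^+(v)$ is $K_{m-1}$-free, so if some $N^+(v)$ has order type $\omega^2$ the induction hypothesis applied inside it already delivers an independent set of type $\omega^2$. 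In the opposite case every $N^+(v)$ has order type $<\omega^2$; since $\omega^2$ is additively indecomposable (a power of $\omega$, in the sense of Theorem~\ref{Cantonormal} and Observation~\ref{decomposable}), this means $N^+(v)$ is infinite in only finitely many blocks, and the complementary region above $v$ retains order type $\omega^2$. One then tries to assemble the independent set block by block in that vast complement.

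The hard part is precisely this assembly, that is, bridging order type $\omega$ up to $\omega^2$, and it is the reason the theorem is nontrivial. The obstruction is structural: $K_m$-freeness does \emph{not} prevent a single vertex from being adjacent to an entire independent set, because a vertex together with pairwise non-adjacent neighbours forms only a star, never a clique. Consequently neither a greedy extension nor a naive diagonalisation suffices. Choosing points one at a time yields only an $\omega$-sequence, hence order type $\omega$; while trying to populate infinitely many blocks with $\omega$ points each founders on the fact that the up-neighbourhoods of points already chosen in low blocks can exhaust the admissible points of a high block that we are still trying to fill. I expect this cross-block gluing to be the crux, and it is exactly where Larson's careful bookkeeping is needed: one must select the blocks and their internal independent sets \emph{simultaneously}, with enough foresight (a fusion-style diagonal argument that keeps cofinally many blocks richly populated) to guarantee that infinitely many blocks survive carrying an infinite independent trace with no edges between distinct chosen blocks. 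This simultaneous construction is the combinatorial heart of Specker's theorem, and it is the same style of argument that later scales up to $\omega^\omega$.
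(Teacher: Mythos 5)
There is a genuine gap, and it sits exactly where you say it does. Your reduction (contrapositive, $K_m$-free graph, independent set of type $\omega^2$), your base case $m=2$, and your dichotomy on the order types of the up-neighbourhoods $N^+(v)$ are all fine; the first horn of the dichotomy correctly hands the problem to the induction hypothesis inside a $K_{m-1}$-free neighbourhood. But the second horn --- every $N^+(v)$ has order type $<\omega^2$, so each meets only finitely many blocks infinitely --- is where the entire content of Specker's theorem lives, and your proposal stops there: ``I expect this cross-block gluing to be the crux'' is a correct diagnosis, not a proof. Worse, the assembly cannot be completed by the kind of fusion you gesture at. Already with two blocks the obstruction is fatal: if $X_0\subseteq B_{n_0}$ and $X_1\subseteq B_{n_1}$ are infinite independent sets, then \emph{no} clique of size $\ge 3$ can have two vertices on the same side, so $K_m$-freeness (for any $m\ge 3$) imposes no constraint whatsoever on the bipartite graph between $X_0$ and $X_1$; that bipartite graph can be chosen (e.g.\ a half-graph) so that no infinite subsets of the two sides span an edgeless rectangle. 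Hence ``keep cofinally many blocks richly populated while avoiding the neighbourhoods of everything chosen so far'' is not achievable by diagonalisation against the raw colouring: each point added to a low block may delete an infinite subset of a high block's reservoir, and infinitely many such deletions can exhaust it. Some additional homogenisation of the colouring must happen \emph{before} the construction starts.

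That missing homogenisation is precisely what the paper's (Larson's) proof supplies, by an entirely different route. Pairs of $U=\{(a,b):a<b<\omega\}$ are classified into four \emph{forms} according to how the coordinates interleave, and the colour of a pair is made to depend only on its form and its interaction scheme $i(\{x,y\})=\{a,b,c,d\}\subseteq\omega$. Applying Ramsey's theorem to these finite subsets of $\omega$ yields an infinite $N$ and colours $j_0,\dots,j_3$ such that every pair of form $k$ with $i(\{x,y\})\subseteq N$ receives colour $j_k$; the $m$-element sets $M_k$, all of whose pairs have pure form $k$, then force $j_0=j_1=j_2=j_3=0$ under your hypothesis, and only \emph{after} that is the set $X$ of type $\omega^2$ built inside $N$, where the cross-block edges are already uniformly controlled. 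If you want to salvage your outline, you need to import an analogue of this step --- a Ramsey-type uniformisation of the adjacency pattern between blocks --- before attempting the block-by-block construction; without it the second horn of your dichotomy is not a smaller instance of anything, it is the theorem itself.
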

 Although not strictly necessary, the proof of Theorem \ref{specker}
  was formalised as a warmup exercise, as it is structured similarly to the proof of Theorem \ref{larsonresult}. 
 The project also required the formalisation of a short but difficult (and error filled) proof by Erd\H{o}s and Milner \cite{erdos-theorem-partition-corr}: 
\begin{theorem}[Erd\H{o}s-Milner]\label{erdosmilner} For all $n<\omega$ and for all $\alpha <\omega_1$,
$$\omega^{1+\alpha \cdot n}\arrows(\omega^{1+\alpha}, 2^n).$$
\end{theorem}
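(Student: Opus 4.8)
The plan is to argue by induction on $n$. The base case $n=0$ is immediate: since $1+\alpha\cdot 0 = 1$ and $2^0 = 1$, the claim reads $\omega\arrows(\omega^{1+\alpha},1)$, which holds trivially because every singleton is vacuously $1$-monochromatic. For the inductive step I would isolate a single ``doubling'' lemma, exploiting the ordinal identity
\[
\omega^{1+\alpha\cdot(n+1)} = \omega^{(1+\alpha\cdot n)+\alpha} = \omega^{1+\alpha\cdot n}\cdot\omega^{\alpha},
\]
which lets me regard the source ordinal as $\omega^\alpha$ consecutive blocks $\langle B_\xi : \xi < \omega^\alpha\rangle$, each of order type $\omega^{1+\alpha\cdot n}$. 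Since the target size must grow from $2^n$ to $2^{n+1}=2\cdot 2^n$ while the $0$-side order type $\omega^{1+\alpha}$ stays fixed, the whole step amounts to paying one factor of $\omega^\alpha$ in the exponent in exchange for doubling the $1$-monochromatic set.

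First I would fix a colouring $c$ of $[\omega^{1+\alpha\cdot(n+1)}]^2$ and assume there is no $0$-monochromatic set of order type $\omega^{1+\alpha}$. Applying the induction hypothesis $\omega^{1+\alpha\cdot n}\arrows(\omega^{1+\alpha},2^n)$ inside each block $B_\xi$---and noting that a block-internal $0$-monochromatic set of type $\omega^{1+\alpha}$ would already contradict the assumption---I obtain in every block a $1$-monochromatic set $H_\xi\subseteq B_\xi$ with $\card{H_\xi}=2^n$. The task then becomes: from these $\omega^\alpha$-many homogeneous $2^n$-sets, manufacture either one $1$-monochromatic set of size $2^{n+1}$ or a $0$-monochromatic set of type $\omega^{1+\alpha}$.

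The combination step is where the real work lies. The natural move is to define an auxiliary colouring $d$ on pairs of block indices by $d(\xi,\eta)=1$ exactly when every cross pair between $H_\xi$ and $H_\eta$ gets colour $1$; a single $d$-value of $1$ fuses $H_\xi\cup H_\eta$ into a $1$-monochromatic set of size $2^{n+1}$, finishing the proof. The hard part will be the complementary case, in which every pair of blocks carries at least one cross pair of colour $0$. A single $0$-edge per block pair is far too weak to build a $0$-monochromatic \emph{clique} of order type $\omega^{1+\alpha}$, so the argument must first thin the family of blocks---pigeonholing the at most $2^n$ possible witnesses so that the colour-$0$ behaviour depends on the block index alone---and then extract a $0$-homogeneous set of the exact type $\omega^{1+\alpha}=\omega\cdot\omega^\alpha$. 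I expect this extraction to be the main obstacle: it must respect the \emph{ordinal} structure (the cofinality of $\omega^\alpha$ and the additive indecomposability of $\omega^{1+\alpha}$) rather than mere cardinality, and it is exactly the kind of delicate bookkeeping where the original Erd\H{o}s--Milner manuscript went wrong. The restriction $\alpha<\omega_1$ should enter precisely here, through the countable enumerations and cofinality computations needed to organise the $\omega^\alpha$-many block indices.
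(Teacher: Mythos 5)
You have the right skeleton, and it is the one the actual proof (which this paper does not reproduce---Theorem~\ref{erdosmilner} is only stated here, with the argument delegated to the cited Erd\H{o}s--Milner paper and its formalisation) really uses: induction on $n$, everything concentrated in a step-up lemma of the form ``if $\omega^{1+\delta}\arrows(\omega^{1+\gamma},k)$ then $\omega^{1+\delta+\gamma}\arrows(\omega^{1+\gamma},2k)$'', instantiated with $\delta=\alpha\cdot n$, $\gamma=\alpha$, $k=2^n$, and the decomposition into $\omega^{\alpha}$ consecutive blocks of type $\omega^{1+\alpha\cdot n}$. The gap is in the combination step, and it is not merely a deferred technicality: the plan you sketch cannot be completed. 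Once you retain only one finite $1$-monochromatic set $H_\xi$ of size $2^n$ per block, the best conceivable output of the ``complementary case'' is one point per block, hence a set of order type at most $\omega^{\alpha}$, which is strictly smaller than the required $\omega^{1+\alpha}=\omega\cdot\omega^{\alpha}$ whenever $\alpha\ge 1$. Worse, the canonisation you propose---making the position of the $0$-edge depend only on the pair of block indices---amounts to a pair-partition relation on $\omega^{\alpha}$ with finitely many colours and a monochromatic set of full type in \emph{every} colour, and by the argument of Observation~\ref{gammafinite} such relations fail for every countable $\alpha\ge 2$. So ``at least one $0$-cross-edge per pair of blocks'' is information from which the target set cannot be recovered.

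The missing idea is to use the absence of a $1$-monochromatic $2^{n+1}$-set to bound the \emph{order type} of common $1$-neighbourhoods, rather than to hunt for individual $0$-edges. Write $K(T)$ for the set of points joined by colour $1$ to every point of $T$. If $T$ is a $1$-monochromatic set of size $2^n$ and $K(T)$ had order type at least $\omega^{1+\alpha\cdot n}$, then the induction hypothesis applied to a subset of $K(T)$ of that type would yield either a $0$-monochromatic set of type $\omega^{1+\alpha}$ (done) or a $1$-monochromatic $2^n$-set $T'\subseteq K(T)$, whence $T\cup T'$ is $1$-monochromatic of size $2^{n+1}$ (done). In the remaining case one therefore knows two things: every subset of type $\omega^{1+\alpha\cdot n}$ contains a $1$-monochromatic $2^n$-set, and every such set has common $1$-neighbourhood of order type strictly below $\omega^{1+\alpha\cdot n}$. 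These facts, combined with the additive indecomposability of $\omega^{1+\alpha\cdot n}$ (so that removing the small sets $K(T)$ from a block leaves a set of full type), are what let one recursively build $\omega^{\alpha}$ successive \emph{infinite} $0$-monochromatic pieces, each $0$-connected to all earlier ones, while maintaining a large reservoir in the later blocks. That reservoir argument---exactly the delicate bookkeeping where the original manuscript went wrong, as you correctly anticipate---is the real content of the theorem, and your proposal stops precisely where it begins.
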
 

The last significant side project necessitated by Larson's proof of Theorem~\ref{larsonresult} was to formalise the Nash-Williams partition theorem, as presented by \stevo~\cite{MR2603812}.
The main objects in Nash-Williams' theorem are families of finite subsets of $\omega$. We introduce some notation and definitions regarding such sets.

\begin{notation} (1) Let $A$ be any subset of $\omega$. We write $[A]^{<\omega}$ for the set of all finite subsets of $A$ and $[A]^{\infty}$ for the set of all infinite subsets of $A$. 
For an integer $k\ge0$, let $[A]^k$ be the set of all the $k$-element subsets of~$A$.

{\noindent (2)} We identify sets in $[\omega]^{<\omega}$ with their increasing enumerations, and hence
the set $[\omega]^{<\omega}$ becomes a subset of the set ${}^{<\omega}\omega$ of finite sequences in
$\omega$. Therefore, we can  consider the relation of being an initial segment on $\PP(\omega)$, writing $s\sqsubseteq t$ when $s$ is an initial segment of $t$.
\end{notation}

We shall be interested in subsets $\FF$ of $[\omega]^{<\omega}$ that are \emph{dense} in the sense that every element of
$[\omega]^{\infty}$ has an element of $\FF$ as an initial segment. In particular, we shall consider such sets that are minimal, meaning that we cannot take away an element of $\FF$ and still satisfy the density requirement. This is the same as to say that if we have an element of $\FF$ then none of its proper initial segments are in $\FF$. Larson \cite{Jean} calls the sets given by the latter requirement, \emph{thin} and Todor{\v c}evi{\'c} \cite[Def.\ 1.1.2 (2)]{MR2603812} calls them Nash-Williams. 
Here is the formal definition.

\begin{Definition}\label{front} (Thin families)
 A \emph{thin} or \emph{Nash-Williams} family on an infinite set $A\subseteq \omega$  is a subset $\FF$ of $[A]^{<\omega}$ such that for every $s$, $t\in \FF$, if $s\sqsubseteq t$ then $s=t$. A set $A$ is \emph{thin} if for all $s$, $t \in A$, $s$ is not a proper initial segment of $t$.
 %
%
\end{Definition}


\begin{theorem}[Nash-Williams]\label{nashwilliams}
 For any infinite set $M\subseteq \omega$, for any thin set 
 $A$, for any function
 $h:\{s \in A: s \subseteq M\} \rightarrow \{0, 1\}$, there exists an 
 $i \in \{ 0, 1\}$
 and an infinite set $N \subseteq M$ so that 
 $h(\{ s \in A:s \subseteq N\}) \subseteq \{ i\}$. 
 \end{theorem}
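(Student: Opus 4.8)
The plan is to prove this by the method of \emph{combinatorial forcing}, identifying each finite subset $s\subseteq M$ with the increasing sequence enumerating it and writing $s\sqsubseteq t$ (respectively $s\sqsubset t$) when $s$ is an (proper) initial segment of $t$. I would fix colour $0$ as a target. For a finite set $s$ and an infinite $X\subseteq M$ with $\max s<\min X$, say that $X$ \emph{accepts} $s$ if every $t\in A$ with $s\sqsubseteq t\subseteq s\cup X$ satisfies $h(t)=0$, and that $X$ \emph{rejects} $s$ if no infinite subset of $X$ accepts $s$. Two trivial observations start the machinery: acceptance passes to infinite subsets, and every infinite $X$ can be refined to an infinite $Y\subseteq X$ that \emph{decides} $s$, since either some infinite subset of $X$ accepts $s$, or $X$ itself rejects $s$.

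The combinatorial heart is a \emph{propagation lemma}: if $X$ rejects $s$ and $s\notin A$, then $X$ rejects $s\cup\{n\}$ for all but finitely many $n\in X$. I would prove the contrapositive by diagonalisation: if infinitely many $n\in X$ admitted an infinite set accepting $s\cup\{n\}$, one could thread these accepting sets together into a single infinite $Z\subseteq X$ that accepts $s$. The point is that any witness $t\in A$ with $s\sqsubseteq t\subseteq s\cup Z$ must have $t\neq s$ (as $s\notin A$), so $n=\min(t\setminus s)\in Z$ and the tail of $Z$ past $n$ lies inside the chosen set accepting $s\cup\{n\}$, forcing $h(t)=0$; this contradicts rejection of $s$. (The restriction $s\notin A$ is harmless: by thinness a member of $A$ has no proper extension in $A$, so leaves behave degenerately and never arise where the lemma is applied.)

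Next I would run a \emph{fusion}. Starting from $M$, build a decreasing chain of infinite sets and points $a_0<a_1<\cdots$, shrinking at each stage so that the limit $N=\{a_0,a_1,\dots\}$ (i) decides every finite $s\subseteq N$, and (ii) whenever a finite $s\subseteq N$ with $s\notin A$ is rejected, so is every one-point extension $s\cup\{a\}$ with $a\in N$ and $a>\max s$. Requirement (i) is met by deciding all subsets of $\{a_0,\dots,a_k\}$ at stage $k$; requirement (ii) is met because the propagation lemma lets us discard, at each stage, the finitely many bad extensions for each of the finitely many relevant rejected sets.

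Finally, the case split. If $N$ accepts $\emptyset$, then every $t\in A$ with $t\subseteq N$ satisfies $\emptyset\sqsubseteq t\subseteq N$, hence $h(t)=0$, and $i=0$ works. Otherwise $N$ rejects $\emptyset$, and I claim $i=1$ works, i.e.\ no $t\in A$ with $t\subseteq N$ has colour $0$. This is where thinness is essential: given such a $t$, thinness means $t$ is the only member of $A$ with $t\sqsubseteq t'$, so if $h(t)=0$ then the tail $\{a\in N:a>\max t\}$ would accept $t$ and $N$ would not reject $t$. Walking down the chain of initial segments from $t$ to $\emptyset$ (whose proper initial segments lie outside $A$, again by thinness), there must be a step where $N$ rejects some $s$ but accepts $s\cup\{a\}$ with $a\in N$ and $a>\max s$ — exactly what (ii) forbids. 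The resulting contradiction gives $h(t)=1$. The main obstacle is precisely this interaction between the propagation lemma and the fusion: organising the diagonalisation so that rejection survives under \emph{every} admissible one-point extension inside $N$, with no exceptions, is the delicate bookkeeping that makes the closing case analysis go through.
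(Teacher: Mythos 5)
Your overall architecture is the same as the paper's second proof (the Galvin--Prikry-style combinatorial forcing used in Todor\v{c}evi\'c's presentation), with the two notions dualised: your ``accepts'' is a universal, hereditary notion (every witness in $A$ gets colour $0$), so your ``rejects'' plays the role of the paper's ``strongly accepts''. The final case split on whether $N$ accepts or rejects $\emptyset$, and the use of thinness to rule out a colour-$0$ element of $A$ inside a rejecting $N$, are sound \emph{given} properties (i) and (ii).

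The gap is in the propagation lemma, which as stated --- for an \emph{arbitrary} infinite $X$ rejecting $s\notin A$ --- is false. Take $A=[\omega]^2$, let $h(\{a,b\})=1$ iff $a\equiv b\pmod 2$, and take $s=\emptyset$, $X=\omega$. Every infinite set contains two numbers of equal parity, so $X$ rejects $\emptyset$; yet for \emph{every} $n$ the set $Y_n=\{m>n: m\not\equiv n \pmod 2\}$ accepts $\{n\}$, so no one-point extension is rejected by $X$. The threading in your contrapositive is exactly what breaks: the sets $Y_n$ accepting the various $s\cup\{n\}$ need not admit a common infinite refinement containing infinitely many of the good $n$ (here any $Z$ threaded through the $Y_{n_i}$ would force $n_{i+1}$ and $n_{i+2}$ to share a parity, contradicting $n_{i+2}\in Y_{n_{i+1}}$). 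The lemma is true only for a set that already \emph{decides} the extensions $s\cup\{n\}$: then ``not rejected'' upgrades to ``accepted'', the good $n$ themselves form a set $Z$ accepting $s$, and rejection of $s$ is contradicted. This is why the paper states its Lemma \ref{Lemma1.19} only for the set $B$ produced by Lemma \ref{Lemma1.18}. Consequently your single-pass fusion, which invokes the propagation lemma at intermediate stages to ``discard the finitely many bad extensions'', is not justified: at stage $k$ the current set decides only subsets of $\{a_0,\dots,a_k\}$, not the extensions $s\cup\{n\}$ for the as-yet-unchosen $n$, and the bad set can be infinite (in the example it is all of $\omega$). The repair is standard but structural: first complete the deciding fusion to obtain $N_0$ deciding all its finite subsets; observe that every infinite subset of $N_0$ still decides everything (both of your notions are hereditary); then run a second fusion inside $N_0$, where the propagation lemma now genuinely yields only finitely many exceptions per stage, to secure property (ii) with no exceptions. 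This two-stage structure is precisely the strengthening the paper calls \isa{strongly\_accepts\_1\_19\_plus}, recorded in the formalisation as a separate 167-line construction.
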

 A more detailed analysis of the formalisation of the Nash-Williams theorem will be given in the following section.
 This theorem  is a generalisation of Ramsey's theorem  \cite{Ramseyth}: \footnote{The word \emph{generalisation} deserves explanation. It is often said, including by Larson~\cite{Jean}, that Nash-Williams theorem is a Ramsey-type statement, however this does not appear immediately. Namely, Ramsey's theorem applies to colourings of \emph{pairs} of elements of $\omega$, while the Nash-Williams theorem applies to colouring singletons in a thin set, not the pairs of the elements of it. The crude analogue of the  theorem applying to pairs of elements of a thin set is easily seen to be false, while however there exists a finer version of Nash-Williams theorem using fronts and the shift-initial segment relation, obtaining a result that does apply to pairs and $n$-tuples of the elements of a thin set.} 
\begin{theorem}[Ramsey]\label{ramsey}
 For every nonzero $p < \omega$, every infinite set $M \subseteq \omega$ 
 and every function $h: [M]^p \rightarrow \{0, 1\}$, there exists an $i \in \{ 0, 1\}$ and an infinite set $N \subseteq M$ so that 
 $h([N]^p) =\{i\}$.
 \end{theorem}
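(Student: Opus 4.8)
The plan is to deduce Ramsey's theorem directly from the Nash-Williams theorem (Theorem~\ref{nashwilliams}) established above. The single observation that makes this work is that, for a fixed $p$, the family $[M]^p$ of $p$-element subsets of $M$ is a \emph{thin} set. Indeed, if $s,t\in[M]^p$ and $s$ were a proper initial segment of $t$, then $s$ would in particular be a proper subset of $t$, forcing $\card{s}<\card{t}$; but every member of $[M]^p$ has exactly $p$ elements, so this is impossible. Hence no element of $[M]^p$ is a proper initial segment of another, and $[M]^p$ is thin.

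With this in hand the deduction is immediate. Given nonzero $p<\omega$, an infinite $M\subseteq\omega$ and $h:\,[M]^p\to\{0,1\}$, I would take $A=[M]^p$ in Theorem~\ref{nashwilliams}; since every element of $[M]^p$ is a subset of $M$, the domain $\{s\in A:\, s\subseteq M\}$ is exactly $[M]^p$, so the hypotheses match. The theorem then yields an $i\in\{0,1\}$ and an infinite $N\subseteq M$ with $h(\{s\in[M]^p:\, s\subseteq N\})\subseteq\{i\}$. Because $N\subseteq M$, the set $\{s\in[M]^p:\, s\subseteq N\}$ is precisely $[N]^p$, and since $N$ is infinite and $p\ge 1$ we have $[N]^p\neq\emptyset$, so the inclusion $h([N]^p)\subseteq\{i\}$ sharpens to the desired equality $h([N]^p)=\{i\}$.

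For a self-contained argument that does not invoke Nash-Williams, the standard route is induction on $p$. The base case $p=1$ is the infinite pigeonhole principle: one of the two fibres of $h:\,M\to\{0,1\}$ is infinite. For the step, one builds by recursion a decreasing chain of infinite sets $M=M_0\supseteq M_1\supseteq\cdots$ with least elements $a_n=\min M_n$ and colours $c_n$, where $M_{n+1}\subseteq M_n\setminus\{a_n\}$ is obtained by applying the inductive hypothesis to the colouring $s\mapsto h(\{a_n\}\cup s)$ on $[M_n\setminus\{a_n\}]^p$; then the colour of any $(p+1)$-element set whose least element is $a_n$ equals $c_n$, and a final pigeonhole on the sequence $(c_n)$ extracts a monochromatic $N$. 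In this self-contained form the main obstacle is precisely this recursive ``diagonal'' construction, which consumes at each stage an existential witness supplied by the inductive hypothesis and therefore demands an explicit choice of witnesses together with a proof that every $M_n$ remains infinite. Via Nash-Williams, by contrast, that difficulty has already been absorbed upstream, and the only thing left to verify is the thinness of $[M]^p$.
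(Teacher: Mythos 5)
Your proposal is correct and follows exactly the route the paper takes: the paper derives Ramsey's theorem from the Nash-Williams theorem with the single remark that $[M]^p$ is thin, which is precisely your cardinality argument. The extra care you take in matching the hypotheses and upgrading the inclusion to an equality (via $[N]^p\neq\emptyset$) only fills in details the paper leaves implicit; the alternative inductive proof you sketch is not needed here.
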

For simplicity, both theorems are presented in their 2-colour versions. We obtain Ramsey's theorem from Nash-Williams simply by noting that the set~$[M]^p$ is thin.
While Ramsey's theorem (Theorem \ref{ramsey}) was  used in the proof of Theorem \ref{specker}, the  
Nash-Williams partition theorem (Theorem \ref{nashwilliams}) was used in a similar fashion in the proof of Theorem \ref{larsonresult}.

We give brief sketches of the proofs of Theorems \ref{larsonresult} and \ref{specker} below. 
For details, the reader may refer to Larson's paper \cite{Jean} or to the formalised versions of the proofs, where every step is made explicit,
in the entries by Paulson at the Archive of Formal Proofs \cite{Nash_Williams-AFP, Ordinal_Partitions-AFP}.

\subsection{Sketch of Larson's proof of Specker's Theorem}

It is sufficient to prove Theorem~\ref{specker} for functions $f:[U]^2 \rightarrow \{0, 1\}$ where $U=\{ (a,b): a<b<\omega \}$ is ordered lexicographically and has order type $\omega^2$. The following is Larson's Definition~2.2 \cite{Jean}.
\begin{Definition}[Interaction Scheme]A pair $A=\{(a, b), (c, d)\}$ of elements from $U$ with $a \leq c$ is of
\emph{form~0} if $a<b<c<d$, \emph{form~1} if $a<c<b<d$,  \emph{form~2} if $a<c<d<b$ and \emph{form~3} if $a=c$ and $b \neq d$.
If $A$  has one of these forms, then the \emph{interaction scheme} of $A$ is defined by $$i(A) =\{a, b, c, d\}.$$
\end{Definition}
Let $m<\omega$ and $f:[U]^2 \rightarrow \{0, 1\}$ be given
so that there is no $m$-element set $M$ for which
$f([M]^2) =\{1\}$. It is enough to find a set $X \subseteq U$ order isomorphic to $\omega^2$ for which $f([X]^2) =\{0\}$.\\ \\
It follows from Ramsey's theorem (Theorem \ref{ramsey}) that we can obtain an infinite set $N \subseteq \omega$
and $j_0$, $j_1$, $j_2$, $j_3 \in \{0, 1\}$ so that for any $k<4$ and any pair $\{x, y\} \subset U$ of form $k$ with 
$i( \{x, y\}) \subseteq N$, we have $f( \{x, y\}) = j_k$.  Then it is shown that for  $N$ and each $k=0, 1, 2, 3$ we can construct four $m$-element sets $M_0$, $M_1$, $M_2$, $M_3$ such that for any $k<4$ and any pair $\{x, y \} \subseteq M_k$, $\{x, y \}$ has form $k$  and $i(\{x, y\}) \subseteq N$, so $f(\{x, y\}) = j_k$. As we had assumed that there is no $m$-element set $M$ for which $f([M]^2) =\{1\}$, there follows $j_0 = j_1 = j_2 = j_3 =0$. It is then shown that for the infinite set $N$ we may obtain a set $X \subseteq U$ which is order isomorphic to $\omega^2$ so that for any pair $\{x, y\} \subseteq X$ there is a $k<4$ so that $\{x, y\}$ has form $k$ and $i(\{x, y\}) \subseteq N$. Therefore, 
\[ f(\{x, y\}) = j_0 = j_1 = j_2 =j_3 =0. \]
Thus  we have shown that $f([X]^2) = \{0\}$.

\subsection{Sketch of Larson's proof of Milner's Theorem}
\label{sec:larsonresult}

The reader will notice that the proof of Theorem~\ref{larsonresult} follows a pattern similar to the one above.
For each $n<\omega$, define 
\[ W(n) =\{ (a_0, a_1,\ldots , a_{n-1}): a_0 < a_1 <\cdots< a_{n-1} < \omega \} \]
ordered lexicographically. $W(n)$ is  thus order isomorphic to $\omega^n$.
 Let $W = W(0) \cup W(1) \cup \cdots$ be ordered first by length of sequence and then lexicographically, so that $W$ is order isomorphic to $\omega^\omega$. It now suffices to prove the theorem for functions
 $f: [W]^2 \rightarrow \{0, 1\}$. 
 
 Some notational conventions:
 \begin{itemize}
	\item $s$ and $t$ denote increasing finite sequences of elements of $\omega$, and we write
 	 $s<t$ to mean every element of $s$ is less than every element of $t$.
 	\item $s \ast t$ denotes the concatenation of two finite sequences.
 	\item $|s|$ denotes the length of~$s$.
 	\item $n_k$ is the $k^\textrm{th}$ term in the enumeration of $N$
in increasing order.
\end{itemize}

 We now present Larson's Definition~3.5 \cite{Jean}.
 
\begin{Definition}[Interaction Scheme]\label{def:inter}
A pair $\{x, y\} \subset W$ is of \emph{form $0$} if $|x| =|y|$. Let $k<\omega$  with $k>0$.  A pair $\{x, y\} \subset W$ with $|x| <|y|$ is of \emph{form 
 $2k-1$} (\emph{form~$2k$}) if there are non-empty sequences $a_1$, $a_2$, $\ldots$, $a_k (a_{k+1})$, $b_1$, $b_2$, $\ldots$, $b_k$ and $c$ and $d$ 
 such that
 \begin{enumerate}
 	\item $x=a_1 \ast a_2 \ast \cdots \ast a_k (\ast a_{k+1})$,
 	\item $y =b_1 \ast b_2 \ast \cdots\ast b_k$,
 	\item $c = (|a_1|, |a_1| +|a_2|, \ldots , |a_1| +|a_2|+\cdots+|a_k|(+|a_{k+1}|))$,
 	\item $d =(|b_1|, |b_1|+|b_2|,\ldots ,|b_1|+|b_2|+\cdots+|b_k|)$,
 	\item $c<a_1 <d < b_1< a_2<b_2<\cdots<a_k<b_k(<a_{k+1})$. 

 \end{enumerate}
 If $\{x, y\} $ is of form $2k-1$ (form $2k$) and $a_1$, $a_2$, $\ldots$, $a_k (a_{k+1})$, $b_1$, $b_2$, $\ldots$, $b_k$, $c$
 and $d$ are as above, then we call 
 $$i(\{x, y\}) = c\ast a_1 \ast d \ast b_1 \ast a_2 \ast b_2 \ast\cdots\ast a_k * b_k (\ast a_{k+1}) $$
 the \emph{interaction scheme} of $ \{x, y\})$.
 \end{Definition}
Let $m<\omega$. The theorem is trivially true for $m=0$ and $m=1$, so we assume that $m>1$.  In a similar fashion as before, we assume that $f:[W]^2 \rightarrow \{0, 1\}$ is given so that there is no $m$-element set $M$ for which $f([M]^2) =\{1\}$. So, to prove the theorem, we will find a set $X$ of type $\omega^\omega$ for which 
$f([X]^2) =\{0\}$. 

To this end, from the Erd\H{o}s-Milner theorem (Theorem \ref{erdosmilner}), 
we infer that for every $k$, $n<\omega$ with $n>0$, $\omega^{n \cdot k} \arrows (\omega^n, k)$; then considering $f$ restricted to $W(n \cdot m)$ we obtain a set $W'(n)$ order isomorphic to $\omega^n$ for which $f([W'(n)]^2) =\{0\}$. 
Let $W' := W'(0) \cup W'(1) \cup  W'(2) \ldots$. It is order isomorphic to $\omega^\omega$ and $f$ has value zero on pairs of sequences in $W'$ of the same length. 

Larson \cite[p.\ts134]{Jean} remarks `Without loss of generality, we may assume that $W'$ is our original set $W$. Thus to prove the theorem, we must find a set $X \subseteq W$ of type $\omega^\omega$ for which $f$ has value zero on pairs of sequences of different lengths.' This identification of $W$ and $W'$ is possible because there is an order isomorphism between them that preserves lengths. Strictly speaking, the bulk of the elaborate construction is done using $W$, then finally mapped back to $W'$; this can be seen in the formal version \cite{Ordinal_Partitions-AFP}.

By applying the Nash-Williams partition theorem (Theorem \ref{nashwilliams}) to $f$, we can obtain an infinite set $N$
and a sequence $\{j_k:k<\omega\}$ so that for any $k<\omega$ with $k>0$ and any pair $\{x, y\} $ of form $k$ with $(n_k) < 
i(\{ x, y\})\subseteq N$,  $f(\{x, y\}) =j_k$.
 Then it can be shown that for each $k<\omega$ with $k>0$ we obtain an $m$-element set $M_k$, so that for any 
$\{x, y\} \subset M_k $ we have $f(\{x, y\}) =j_k$. Thus, for any $k<\omega$ with $k>0$ it follows that $j_k =0$.
It is then shown that we may obtain a set $X\subseteq W$ order isomorphic to $\omega^\omega$, so that for each $\{x, y\} \subseteq X$
there is an $l<\omega$ for which $\{x, y\}$  has form $l$
and if $l>0$, then $(n_l) < i(\{x, y\}) \subseteq N$. Thus, for pairs $\{x, y\} \subseteq X$ which are not of form 0, we have  $f(\{x, y\})= j_l =0$ for some $l$.
In the  case $l=0$,  for any pair of form 0 we have by assumption $f(\{x, y\}) =0$. So we have shown that $f([X]^2) =\{0\}$.

\section{A proof of the Nash-Williams theorem}\label{sec:nw}

In this section we shall give a proof of a fundamental theorem due to Crispin Nash-Williams \cite{NashWilliamsoriginal} which we stated as Theorem \ref{nashwilliams} above. This result was discovered while studying the notion of well quasi orders (wqo) $P$, notably distinguishing those that have the property that for every countable ordinal $\alpha$, the set ${P}^\alpha$ of all sequences of length
$\alpha$ from $P$ is wqo when ordered by the embeddability relation. Such orders are called bqo or better quasi orders~\cite{NashWilliamsoriginal}. Neither wqo nor bqo are relevant here, but the theorem proved by Nash-Williams is of use in the study of ordinal partition relations, as well as in many other contexts, for example reverse mathematics \cite{MR1141936,MR1428011}. In particular, the theorem was used by Larson in the paper we formalised~\cite{Jean}.

Nash-Williams' theorem has seen many different proofs. While above we have steered away from presenting full proofs about ordinal partition relations as they are too long, we do present a proof of Nash-Williams' theorem, to give the reader the flavour of the way that proofs are constructed in this subject. Many other proofs of the theorem are known: Alberto Marcone gives one \cite{MR1428011} and refers to several others, including one by Stephen G. Simpson using descriptive set theory and the notion of bad arrays, which is perhaps the most popular proof these days (it originates in the methods of Fred Galvin and Karel Prikry~\cite{GalvinPrikry} and Richard Laver~\cite{laver71}) and a proof using well-founded trees, which can be found in the survey paper \cite{CarroyPequignot} by Raphaël Carroy and Yann Pequignot. Paulson formalised Nash-Williams' theorem as described in~\S\ref{sec:formalisation-nw}. The
formalisation~\cite{Nash_Williams-AFP} corresponds to Todor{\v c}evi{\'c}'s presentation~\cite{MR2603812} of the original Nash-Williams proof, which is the proof we give. He uses a notion of  combinatorial forcing due to Galvin and Prikry~\cite{GalvinPrikry}. The proof of Nash-Williams' theorem using combinatorial forcing appeared as early as 1985 in a note by Ian Hodkinson on a Ph.D course given by Wilfrid Hodges
\cite{Hodkinsonnotes} (which the authors have specifically asked not to use as a primary reference), but in fact Galvin and Prikry \cite{GalvinPrikry} prove a stronger theorem by the same method.

The theorem basically says that if we divide a thin set on a set $A$ of natural numbers into a finite number of pieces, then one of them will contain a thin set on some infinite subset $A'$ of $A$. This is the analogue of the version of the pigeonhole principle which says that if an infinite set is divided into a finite number of pieces, then one of the pieces is infinite. Notice that 
Theorem \ref{nashwilliams} yields this by applying the theorem finitely many times.

Many results in infinite combinatorics can be seen as instances of the fact that one can do set-theoretic forcing over a countable family of dense sets without changing the underlying universe, as proved by Paul Cohen~\cite{Coh66} and explained more carefully by others \cite{fasttrack,kunen}. This approach is known as \emph{combinatorial forcing} and is used in the proof we present. We use capital letters close to the beginning of the Latin alphabet $B, C \ldots$ to denote infinite subsets of $A$ and lowercase letters close to $s$ such as $s,u,t$ to denote finite sequences in $A$. Here comes a key definition of this particular instance of combinatorial forcing:

\begin{Definition}\label{accepts/rejects} Comparable, accepts, rejects, decides:
\begin{enumerate}
	\item $s$ and $t$ are \emph{comparable} if either $s\sqsubseteq t$ or $t\sqsubseteq s$.
	\item $B$ \emph{accepts} $s$ if there is $t\in \FF$ comparable to $s$ such that $t\setminus s\subseteq B$ (equivalently, $t\subseteq s\cup B$). Moreover $B$ \emph{strongly accepts} $s$ if every $C\in [B]^\infty$ accepts $s$.
	\item $B$ \emph{rejects} $s$ if $B$ does not accept $s$.
	\item $B$ \emph{decides} $s$ if $B$ either strongly accepts $s$ or rejects $s$.
	\item If $\FF'$ is a subset of $\FF$, we make definitions similar to (1)-(4) taking $\FF'$ as a parameter, and then we
add the qualification \emph{with respect to $\FF'$} to the notion of accepting, rejecting and so on.%
\footnote{For readers familiar with forcing: we may see these definitions as coming from a forcing notion consisting of the
pairs $(s,B)$ with $\max(s)<\min (B)$ where the extension is given by $(s,B)\le (t,C)$ if $s\sqsubseteq t$, $C\subseteq B$ and $t\setminus s\subseteq
B$. This resembles Prikry or Mathias forcing. In Hodkinson's notes such pairs are called \emph{Prikry pairs} and the idea of using them in combinatorics comes from the Galvin-Prikry partition theorem \cite{GalvinPrikry}.}
\end{enumerate}
\end{Definition}

Let us make some simple observations about the notions introduced.

\begin{observation}\label{Lemma1.17} Let $B$ and $s$ be given.
\begin{enumerate}
	\item If $B$ rejects (strongly accepts) $s$, then so does every $C\in [B]^\infty$. It follows that the analogue is true for the notion of deciding.
	\item There exists $C\subseteq B$ which decides $s$ and where $\max(s)<\min (C)$.
	\item $B$ accepts (rejects, strongly accepts, decides) $s$ iff $B\setminus\max(s)$ accepts (rejects, strongly accepts, decides) $s$.
\end{enumerate}
\end{observation}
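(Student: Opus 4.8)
The plan is to derive all three parts from a single monotonicity fact about accepting together with the elementary observation that the ``new'' elements witnessing acceptance of $s$ always lie strictly above $\max(s)$. The only substantive content of Definition~\ref{accepts/rejects} is that $B$ accepts $s$ iff some $t\in\FF$ comparable to $s$ has $t\setminus s\subseteq B$; everything else is a matter of unwinding the quantifiers.

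First I would record monotonicity: if $C\subseteq B$ and $C$ accepts $s$ via a witness $t$, then $t\setminus s\subseteq C\subseteq B$, so the same $t$ shows that $B$ accepts $s$. Part (1) now drops out. Rejecting is the contrapositive: if $B$ rejects $s$ and $C\in[B]^{\infty}$, then $C$ cannot accept $s$, for otherwise $B$ would. Strongly accepting needs only a quantifier chase: if every infinite subset of $B$ accepts $s$ and $C\in[B]^{\infty}$, then every infinite subset of $C$ is again an infinite subset of $B$ and so accepts $s$, whence $C$ strongly accepts $s$. Deciding is the disjunction of rejecting and strongly accepting, so its analogue follows at once.

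Next I would prove Part (3), which carries the only real idea. If $t\in\FF$ is comparable to $s$, then either $t\sqsubseteq s$, so that $t\setminus s=\emptyset$ and acceptance via $t$ holds for every set, or $s\sqsubseteq t$, in which case---reading finite sets as their increasing enumerations---the elements of $t\setminus s$ are exactly those exceeding $\max(s)$. In both cases $t\setminus s\subseteq B$ iff $t\setminus s\subseteq B\setminus\max(s)$, since deleting elements $\le\max(s)$ never disturbs a witness; this gives the equivalence for accepting, and rejecting is its negation. For strongly accepting, the forward direction is immediate because $[B\setminus\max(s)]^{\infty}\subseteq[B]^{\infty}$; for the converse I would take an arbitrary $C\in[B]^{\infty}$, pass to its tail $C\setminus\max(s)\in[B\setminus\max(s)]^{\infty}$ (still infinite), note it accepts $s$ by hypothesis, and then apply the accepting equivalence to $C$ itself to conclude that $C$ accepts $s$. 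Deciding again follows by disjunction.

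Finally, Part (2) is a case split fed by Parts (1) and (3). If $B$ strongly accepts $s$, then by Part (1) the tail $C=B\setminus\max(s)$ still strongly accepts, hence decides, $s$, and $\max(s)<\min(C)$ by construction. Otherwise $B$ fails to strongly accept $s$, so by definition some $C_0\in[B]^{\infty}$ fails to accept, i.e.\ rejects, $s$; taking $C$ to be the part of $C_0$ strictly above $\max(s)$, Part (3) shows $C$ still rejects, hence decides, $s$, again with $\max(s)<\min(C)$. I expect no deep obstacle here: the work is bookkeeping, and the one point demanding care is Part (3), where one must keep the two comparability cases straight and verify that truncating a set to its tail above $\max(s)$ both preserves infinitude and leaves every potential witness untouched---precisely the role of the ``witnesses exceed $\max(s)$'' observation.
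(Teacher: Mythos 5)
Your proof is correct and follows essentially the same route as the paper's: part (1) by unwinding the definitions, part (3) via the key observation that a witness $t$ with $s\sqsubseteq t$ contributes only elements above $\max(s)$ (and $t\sqsubseteq s$ gives $t\setminus s=\emptyset$), and part (2) by splitting on whether a tail of $B$ strongly accepts $s$ or admits a rejecting infinite subset. The one quibble is an off-by-one in your first case of part (2): under the set-theoretic reading, $B\setminus\max(s)=\{b\in B:\,b\ge\max(s)\}$ may still contain $\max(s)$, so to guarantee $\max(s)<\min(C)$ you should take $C=B\setminus(\max(s)+1)$, exactly as the paper does.
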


\begin{proof} {\bf (of Observation \ref{Lemma1.17})}. (1) is evident from the definitions. For~(2), let $C=B\setminus [\max(s)+1]$. If $C$ strongly accepts $s$, then $C$ is as required. If that $C$ does not strongly accept $s$, then there is $D\in [C]^\infty$ which rejects $s$ and then $D$ is as required.

For~(3), if $B$ accepts $s$ then there is $t\in \FF$ comparable with $s$ such  that $t\setminus s\subseteq B$.
But then it follows that  $t\setminus s\subseteq (B\setminus \max(s))$ since if $t\sqsubseteq s$ this is vacuously true, and if
$s\sqsubseteq t$ then $t\setminus s$ is disjoint from $\max(s)$. The rest of the cases are proved similarly.
$\eop_{\ref{Lemma1.17}}$
\end{proof}

\begin{lemma}\label{Lemma1.18} There is $B\in [A]^\infty$ which decides all its finite subsets.
\end{lemma}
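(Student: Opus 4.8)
The plan is to construct $B$ by a \emph{fusion} (diagonalisation) argument: build it one element at a time, at each stage passing to a smaller infinite set that decides all the finite subsets seen so far. The three facts that make this work are already available in Observation~\ref{Lemma1.17}: deciding is inherited by infinite subsets (part (1)); a single infinite set can always be shrunk so as to decide any given $s$ (part (2)); and whether a set decides $s$ depends only on its elements strictly above $\max(s)$ (part (3)). Note that thinness of $\FF$ plays no role here; the argument is purely about the accept/reject/decide machinery.

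Concretely, I would define recursively a descending chain of infinite sets $A=A_0\supseteq A_1\supseteq\cdots$ together with points $a_k=\min(A_k)$, maintaining $A_{k+1}\subseteq A_k\cap(a_k,\omega)$ (the elements of $A_k$ above $a_k$) and the requirement that $A_{k+1}$ \emph{decides every finite subset of} $\{a_0,\dots,a_k\}$. The latter is arranged at each stage by starting from $A_k\cap(a_k,\omega)$ and applying Observation~\ref{Lemma1.17}(2) once for each of the finitely many subsets of $\{a_0,\dots,a_k\}$ in turn; Observation~\ref{Lemma1.17}(1) guarantees that decisions already secured survive as we shrink further, so finitely many steps yield the desired $A_{k+1}$. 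To handle $s=\emptyset$, I would first shrink $A$ so that it decides $\emptyset$, a decision that by Observation~\ref{Lemma1.17}(1) is then inherited by every infinite subset. Setting $B=\{a_k:k<\omega\}$ gives an infinite, strictly increasing subset of $A$.

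It then remains to check that $B$ decides every finite $s\subseteq B$. Given a nonempty such $s$, let $k$ be the index with $\max(s)=a_k$, so that $s\subseteq\{a_0,\dots,a_k\}$ and hence $A_{k+1}$ decides $s$. The elements of $B$ strictly above $\max(s)$ are exactly $\{a_{k+1},a_{k+2},\dots\}$, an infinite subset of $A_{k+1}$, which therefore also decides $s$ by Observation~\ref{Lemma1.17}(1). Since, as in Observation~\ref{Lemma1.17}(3), whether a set decides $s$ depends only on its elements strictly above $\max(s)$, it follows that $B$ decides $s$; the case $s=\emptyset$ is covered by the initial shrinking step.

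The main obstacle is the correctness of the fusion: arguing that a decision made inside one stage $A_{k+1}$ persists in the final set $B$, even though $B$ is eventually \emph{larger} than every single $A_{k+1}$. This is exactly where the conjunction of parts (1) and (3) of Observation~\ref{Lemma1.17} is essential: part (3) localises the decision about $s$ to the elements above $\max(s)$, all of which lie in $A_{k+1}$, and part (1) transfers the decision from $A_{k+1}$ down to that tail. Beyond this, the only care needed is to keep the task at each stage finite, so that finitely many applications of Observation~\ref{Lemma1.17}(2) always suffice.
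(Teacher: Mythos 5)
Your proposal is correct and is essentially the paper's own argument: the same diagonalisation building a descending chain of infinite sets, with finitely many applications of Observation~\ref{Lemma1.17}(2) at each stage and parts (1) and (3) used to transfer the decisions to the final set $B$ (your $\{a_0,\dots,a_k\}$ is exactly the paper's $s_{k+1}$). The only cosmetic difference is that you localise decisions to elements \emph{strictly} above $\max(s)$ rather than $B\setminus\max(s)$ as in the literal statement of (3), which is harmless since the same reasoning gives both versions.
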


\begin{proof} {\bf (of Lemma \ref{Lemma1.18})}. By recursion on $n<\omega$ we shall choose pairs $(s_n, A_n)$ so that
\begin{itemize}
\item $A_n\in [A]^\infty$ and $s_n\in [A]^{<\omega}$,
\item $A_n$ decides every subset of $s_n$,
\item $\max(s_n)<\min(A_n)$ and
\item $A_{n+1}\in [A_n]^\infty$.
\end{itemize}
We let $s_0=\emptyset$ and we choose $A_0$ using Observation \ref{Lemma1.17} (2). Given $(s_n, A_n)$, let
$s_{n+1}=s_n\cup \min(A_n)$ and let $A_{n+1}\in [A_n]^\infty$ be a set which decides every subset of $s_{n+1}$. Such a set is obtained by a finite sequence of applications of Observation \ref{Lemma1.17} (2).
By cutting off the first several elements of $A_{n+1}$, which we can do by applying Observation \ref{Lemma1.17} (1), we can assume that $\max(s_{n+1})<\min(A_{n+1})$.

At the end of this recursion, let $B=\bigcup_{n<\omega} s_n$. Since we have made sure that $|s_n|=n$ for every $n$, we can conclude that $B$ is infinite. If $s$ is a finite subset of $B$, then there is first $n$ such
that $s\subseteq s_n$. We have that $B\setminus \max(s_n)\subseteq A_n$ and therefore $B\setminus \max(s_n)$ decides $s$. By Observation \ref{Lemma1.17}(3), we conclude that $B$ decides $s$.
$\eop_{\ref{Lemma1.18}}$
\end{proof}

Let $B$ be as provided by Lemma \ref{Lemma1.18}. The final lemma we need is the following:

\begin{lemma}\label{Lemma1.19} If $s\subseteq B$ is strongly accepted by $B$, then $B$ strongly accepts $s\cup \{n\}$ for all but finitely many $n\in B$. In particular, there is $m$ such that $B\setminus m$ strongly accepts $s\cup \{n\}$ for all
$n\in B\setminus m$.
\end{lemma}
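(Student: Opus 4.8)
The plan is to exploit the two facts already in hand about the set $B$: that $B$ \emph{decides} every finite subset of itself (Lemma~\ref{Lemma1.18}) and that $B$ strongly accepts $s$ by hypothesis. Whenever $n\in B$ the set $s\cup\{n\}$ is again a finite subset of $B$, so deciding yields the clean dichotomy that, for each such $n$, $B$ either strongly accepts $s\cup\{n\}$ or rejects it. This turns the whole lemma into a finiteness statement: it suffices to show that $B$ rejects $s\cup\{n\}$ for only finitely many $n\in B$. I would therefore set
\[
C=\{\,n\in B:\ n>\max(s)\ \text{and}\ B\ \text{rejects}\ s\cup\{n\}\,\}
\]
and reduce everything to proving that $C$ is finite; for $n\in B$ with $n>\max(s)$ and $n\notin C$ the dichotomy forces strong acceptance of $s\cup\{n\}$, while the finitely many $n\le\max(s)$ are harmless.

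For that reduction I would argue by contradiction. Suppose $C$ is infinite, so $C\in[B]^{\infty}$. Since $B$ strongly accepts $s$, the infinite subset $C$ must accept $s$, giving some $t\in\FF$ comparable with $s$ with $t\setminus s\subseteq C$. The crux is to feed this one witness $t$ back into a one-point extension of $s$. As $t$ and $s$ are comparable, either $t\sqsubseteq s$, or $s$ is a proper initial segment of $t$. In the first case, for any $n\in C$ we still have $t\sqsubseteq s\cup\{n\}$ and $t\setminus(s\cup\{n\})=\emptyset\subseteq B$, so $B$ accepts $s\cup\{n\}$, contradicting $n\in C$. In the second case $t\setminus s\neq\emptyset$, and I would take $n=\min(t\setminus s)\in C$: since every element of $t\setminus s$ exceeds $\max(s)$ and $n$ is its least member, $s\cup\{n\}\sqsubseteq t$, while $t\setminus(s\cup\{n\})\subseteq t\setminus s\subseteq C\subseteq B$. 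So again $t$ witnesses that $B$ accepts $s\cup\{n\}$, contradicting $n\in C$. Either way $C$ cannot be infinite, which proves the first assertion.

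For the ``in particular'' clause I would simply choose $m$ larger than every member of the finite set $C\cup\{\max(s)\}$. Then each $n\in B\setminus m$ satisfies $n>\max(s)$ and $n\notin C$, so $B$ strongly accepts $s\cup\{n\}$; and because $B\setminus m\in[B]^{\infty}$, Observation~\ref{Lemma1.17}(1) transfers strong acceptance downward, so $B\setminus m$ strongly accepts $s\cup\{n\}$ for every $n\in B\setminus m$, exactly as claimed.

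The main obstacle is the step where $C$ accepts $s$: the essential insight is that the \emph{same} witness $t$ produced there already certifies acceptance of a one-point extension $s\cup\{n\}$ with $n=\min(t\setminus s)\in C$, which directly contradicts the defining property of $C$. Spotting this re-use of the witness, together with correctly handling the degenerate case $t\sqsubseteq s$ (where $t\setminus s$ may be empty), is the only genuinely non-routine part of the argument; everything else is bookkeeping about initial segments and the closure properties recorded in Observation~\ref{Lemma1.17}.
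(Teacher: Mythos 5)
Your proposal is correct and follows essentially the same route as the paper: the same set $C$ of rejected one-point extensions, the same contradiction via a witness $t\in\FF$ for $C$ accepting $s$, the same case split on $t\sqsubseteq s$ versus $s\sqsubset t$ with $n=\min(t\setminus s)$, and the same use of Observation~\ref{Lemma1.17} for the final clause. Your handling of the degenerate case $t\sqsubseteq s$ is in fact slightly more explicit than the paper's.
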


\begin{proof} {\bf (of Lemma \ref{Lemma1.19})} Suppose, for a contradiction, that there is $s\subseteq B$ such that the set
\[
C=\{n\in B\setminus [\max(s)+1]:\, B \mbox{ rejects }s\cup \{n\}\}.
\]
is infinite, since $B$ decides every $s\cup \{n\}$. Hence $C$, in particular, accepts $s$ by the assumption on $B$, as exemplified by some $t\in\FF$. If $t\sqsubseteq s$ then clearly $t\sqsubseteq s\cup \{n\}$ for any $n$ and $t\setminus s\subseteq C$, so a contradiction. Hence $s$ is a proper initial segment of $t$.
Let $n=\min(t\setminus s)$. We claim that $C$ accepts $s\cup \{n\}$, which will give a contradiction with the choice of $C$.
Indeed, $t\setminus (s\cup \{n\})\subseteq C$ and $t\in\FF$, so we are done with the first claim of the lemma.

The second claim follows by taking $m$ large enough so that $B$ strongly accepts $s\cup \{n\}$ for all $n>m$ and
then using the hereditary nature of strong acceptance, as per Observation \ref{Lemma1.17}(3).
$\eop_{\ref{Lemma1.19}}$
\end{proof}

We now go back to the proof of Theorem \ref{nashwilliams}. Let $\FF_i=c^{-1}(i)$ for $i<2$. Clearly, both $\FF_i$ are thin sets, so
all the observations and lemmas we proved about $\FF$ apply also to each $\FF_i$. In particular, by applying
Lemma \ref{Lemma1.18} and Lemma \ref{Lemma1.19} to $\FF_0$ we can find $A'\in [A]^\infty$ which decides every of its finite subsets with respect to $\FF_0$ and moreover, for every $s\in [A']^{<\omega}$ which $A'$ strongly accepts, $A'$ also strongly accepts $s\cup \{n\}$ for all $n\in A'\setminus [\max(s)+1]$. If $A'$ rejects $\emptyset$, then clearly no finite subset of $A'$ is in $\FF_0$ and hence we have $c(s)=1$ for every element of $[A']^{<\omega}\cap\FF$.

Now suppose that $A'$ strongly accepts $\emptyset$ with respect to $\FF_0$. It follows by the choice of $A'$ (and an inductive argument) that $A'$ strongly accepts all its finite subsets with respect to $\FF_0$. If there were to exist an
element $s\in[A']^{<\omega}$ with $s\in \FF_1$, then the strong acceptance by $A'$ of $s$ would yield a $t\in \FF_0$ (so $t\neq s$)
comparable with $s$, which is impossible since $\FF$ is thin. Therefore we have $c(s)=0$ for every element of $[A']^{<\omega}\cap\FF$.
$\eop_{\ref{nashwilliams}}$

\section{Introduction to Isabelle}\label{sec:isabelle}
Isabelle is an interactive theorem prover originally developed in the 1980s with the aim of supporting multiple logical formalisms. These include first-order logic (intuitionistic as well as classical) and higher-order logic as well as Zermelo Fraenkel set theory. However, the 1990s saw higher-order logic take a dominant role in the field of interactive theorem proving, particularly in hardware verification~\cite{harrison-exp,kalvala-hol,PVS96:CAV}. While Isabelle/ZF and Isabelle/HOL share the entire Isabelle code base (basic inference procedures, a sophisticated user interface, etc.), Isabelle/HOL \cite{isa-tutorial} has much additional automation: so much so that it's the best choice even for set theory.

Unlike proof assistants based on constructive type theories, Isabelle/HOL implements \emph{simple type theory}. Types can take types as parameters but not integers for example. The type of finite sequences (known as \emph{lists}) takes the component type as a parameter, but there is no type of $n$-element lists. Logical predicates form the basis of a basic typed set theory, where any desired set can be expressed by comprehension over a formula. We can define the \textit{set} of $n$-element lists where the elements are drawn from some other set. Thus, the simple framework given by types can be refined through the use of sets. 

 There are always trade-offs between expressiveness of a formalism and ease of automation. Reliance on a simple classical formalism frees us from the many technical difficulties which seem to plague constructive type theories, such as intensional equality, difficulties with the concept of set, and performance issues in space and time. 
 
Isabelle employs the time-honoured LCF approach~\cite{gordon-tactics-milner}. This architecture ensures soundness through the use of a small proof kernel that implements the rules of inference and has the sole right to declare a statement to be a theorem.
Upon this foundation, Isabelle/HOL provides many forms of powerful automation~\cite{paulson-from-lcf}:
\begin{itemize}
  \item \emph{Simplification}, i.e., systematic directed rewriting using identities. 
  \item Sophisticated \emph{logical reasoning} even with quantifiers.
  \item \emph{Sledgehammer}: strong integration with external theorem provers.
  \item Automatic \emph{counterexample finding} for many problem domains. 
\end{itemize}
We found that some of this automation works effectively with Larson's elaborate constructions on sequences.

\subsection{Simple type theory in Isabelle/HOL}

Isabelle's higher-order logic is closely based on Church's simple type theory~\cite{church40}. It includes the following elements:
\begin{itemize}
  \item \emph{Types} and type operators, for example \isa{int} (the type of integers), or \isa{$\alpha$\isasymRightarrow$\beta$} (the type of functions from $\alpha$ to~$\beta$), or
  \isa{$\alpha$\,list} (the type of finite sequences whose elements have type~$\alpha$). Note the postfix syntax: \isa{(int list) set} is the type of sets of lists of integers.
  \item \emph{Terms} built of constants, variables, $\lambda$-abstractions, and function applications.
  \item \emph{Formulas}: terms of the truth value type, \isa{bool} (Church's~$o$), with the usual logical connectives and quantifiers.
  \item The \emph{axiom of choice} (AC) for all types via Hilbert's operator $\epsilon x. \phi$, denoting some~$a$ such that $\phi(a)$ if such exists.%
 \footnote{An undefined value is simply regarded as underspecified. It has the expected type, and we always have $(\epsilon x. \phi) = (\epsilon x. \phi)$ for example.}
 The Isabelle syntax is \isa{SOME x.\ P x}.
\end{itemize}
The typed set theory essentially identifies sets with predicates, with type \isa{$\alpha$\,set} essentially the same as \isa{$\alpha$\isasymRightarrow bool}.
On this foundation, recursive definitions of types, functions and predicates/sets are provided through programmed procedures that reduce such definitions to primitive constructions and automatically prove the essential properties.  This basis is expressive enough for the formalisation of the numerous advanced results mentioned in the introduction.

\subsection{ZFC in simple type theory}

Since the \isa{set} type operator can be iterated only finitely many times,
simple type theory turns out to be weaker than Zermelo set theory (let alone ZF).
For work requiring the full power of ZFC, it is convenient to assume some version of the ZF axioms within Isabelle/HOL\@. 
The approach adopted here~\cite{ZFC_in_HOL-AFP} seeks a smooth integration between ZFC and simple type theory. We introduce a type \isa{V} (the type of all ZF sets) and then \isa{V set} is the type of classes. Our ZF axioms characterise the \emph{small} classes, those that can be embedded into elements of~\isa{V}\@.
The axiom of choice is inherited from Isabelle/HOL\@.
On this basis it is straightforward to define the usual elements of Cantor's paradise, including ordinals, cardinals, alephs and order types. We borrow large formal developments from the existing Isabelle/HOL framework: recursion on~$\in$ is just an instance of well-founded recursion, and it's easy to deduce that the type \isa{real}  corresponds to some element of~\isa{V} without redoing the construction of the real numbers.  

We define order types on wellorderings only (yielding ordinals). These  wellorderings can be defined on any Isabelle/HOL type: we can consider orderings defined on type \isa{nat} rather than on the equivalent ordinal, $\omega$. We started with a small library of facts about order types, which grew and grew in accordance with the demands of the case study.

The point of adopting Isabelle/HOL over Isabelle/ZF is the possibility of making use of its aforementioned automation (Sledgehammer) and the counterexample-finding tools (Nitpick~\cite{blanchette-nitpick} and Quickcheck~\cite{bulwahn-quickcheck}). 
The main drawback of doing set theory in Isabelle/HOL is the impossibility of working without AC: the axiom is inherently part of the framework. That drawback has no bearing on the present project, however.

\section{Formalising the Nash-Williams theorem}\label{sec:formalisation-nw}

Although the Nash-Williams partition theorem is only a minor part of the project, it's a key result and its formalisation is brief enough to present in reasonable detail. In the next section, we'll turn to Larson's paper.

\subsection{Preliminaries}

As we saw in Sect.\ts\ref{sec:nw} above, the theorem is concerned with sets of natural numbers.
Finite sets of natural numbers can be identified with ascending finite sequences;
typical treatments of Nash-Williams use sets, while Larson uses sequences. For sets $S$ and $T$, we write $S<T$ to express that every element of $S$ is less than every element of~$T$ (it holds vacuously if either set is empty). This is essentially the same statement as the $s<t$ mentioned in the previous section.
Our formalisation \cite{Nash_Williams-AFP} follows \stevo~\cite{MR2603812} as in the proof in Sect.\ts\ref{sec:nw}.

$S$ is an \textit{initial segment} of~$T$ if $T$ can be written in the form $S\cup S'$ such that $S<S'$, written \isa{S\ \isasymlless \ S'} in Isabelle syntax. 
\begin{isabelle}
\isacommand{definition}\ init\_segment\ ::\ "nat\ set\ \isasymRightarrow \ nat\ set\ \isasymRightarrow \ bool"\isanewline
\ \ \isakeyword{where}\ "init\_segment\ S\ T\ \isasymequiv \ \isasymexists S'.\ T\ =\ S\ \isasymunion \ S'\ \isasymand \ S\ \isasymlless \ S'"
 \end{isabelle}
 
The \textit{Ramsey} property expresses the conclusion of the theorem for the general case of $r$ components. 
Its definition for a family $\cal F$ of sets $\cal F$ and an integer~$r$ is straightforward. A partition of $\cal F$ into $r$ disjoint sets is expressed as a map $f:{\cal F}\to \{0,\ldots,r-1\}$. Partition~$j$ is expressed as the inverse image $f^{-1}(j)$, written \isa{f\ -`\ \{j\}} in Isabelle syntax.
 
Now the Ramsey property for $\cal F$ and~$r$ holds if
for every partition map $f$ and every infinite set~$M$, there exists an infinite $N\subseteq M$ and $i<r$ such that for all $j<r$, if $j\not=i$ then partition~$j$ does not contain any subsets of~$N$. (\isa{Pow\ N} is the powerset of \isa{N}\@.)
 \begin{isabelle}
\isacommand{definition}\ Ramsey\ ::\ "[nat\ set\ set,\ nat]\ \isasymRightarrow \ bool"\isanewline
\ \ \isakeyword{where}\ "Ramsey\ \isasymF \ r\ \isasymequiv\isanewline
\ \ \ \ \ \isasymforall f\ \isasymin \ \isasymF \ \isasymrightarrow \ \{..<r\}.\ \isasymforall M.\ infinite\ M\ \isasymlongrightarrow \isanewline 
\ \ \ \ \ \ \ \ \ (\isasymexists N\ i.\ N\ \isasymsubseteq \ M\ \isasymand \ infinite\ N\ \isasymand \ i<r\ \isasymand \ \isanewline
\ \ \ \ \ \ \ \ \ (\isasymforall j<r.\ j\isasymnoteq i\ \isasymlongrightarrow \ f\ -`\ \{j\}\ \isasyminter \ \isasymF \ \isasyminter \ Pow\ N\ =\ \{\}))"
\end{isabelle}
 
 Recall that a family $\cal F$ of sets is thin provided every element of  $\cal F$ is finite (expressed as \isa{\isasymF \ \isasymsubseteq \ \{X.\ finite X\}}) and it does not contain distinct elements $S$ and $T$ where one is an initial segment of the other (see Definition \ref{front}).

\begin{isabelle}
\isacommand{definition}\ thin\_set\ ::\ "nat\ set\ set\ \isasymRightarrow \ bool"\isanewline
\ \ \isakeyword{where}\ "thin\_set\ \isasymF \ \isasymequiv\isanewline
\ \ \ \ \isasymF \ \isasymsubseteq \ \{X.\ finite X\}\ \isasymand \ (\isasymforall S\isasymin \isasymF .\ \isasymforall T\isasymin \isasymF .\ init\_segment\ S\ T\ \isasymlongrightarrow \ S=T)"
\end{isabelle}

These definitions provide the necessary vocabulary to state the theorem, although its proof appears at the end of the development after those of all prerequisite lemmas.  
\begin{isabelle}
\isacommand{theorem}\ Nash\_Williams:\isanewline
\ \ \isakeyword{assumes}\ \isasymF :\ "thin\_set\ \isasymF "\ "r\ >\ 0"\ \isakeyword{shows}\ "Ramsey\ \isasymF \ r"
\end{isabelle}

We now formalise the concepts of \textit{rejecting}, \textit{strongly accepting} and \textit{deciding} a set, as described in Sect.\ts\ref{sec:nw}. We regard  $\cal F$ as fixed and say $M$ decides $S$, etc. Note that the identification of `$M$ accepts $S$' with `$M$ does not reject $S$' is formalised as an abbreviation rather than a definition, a distinction that affects proof procedures but makes no difference mathematically.

\begin{isabelle}
\isacommand{definition}\ comparables\ ::\ "nat\ set\ \isasymRightarrow \ nat\ set\ \isasymRightarrow \ nat\ set\ set"\isanewline
\ \ \isakeyword{where}\ "comparables\ S\ M\ \isasymequiv\isanewline
\ \ \ \{T.\ finite\ T\ \isasymand \ (init\_segment\ T\ S\ \isasymor \ init\_segment\ S\ T\ \isasymand \ T-S\ \isasymsubseteq \ M)\}"
\end{isabelle}

\begin{isabelle}
\isacommand{definition}\ "rejects\ \isasymF \ S\ M\ \isasymequiv \ comparables\ S\ M\ \isasyminter \ \isasymF \ =\ \{\}"\end{isabelle}
 
\begin{isabelle}
\isacommand{abbreviation}\ "accepts\ \isasymF \ S\ M\ \isasymequiv \ \isasymnot \ rejects\ \isasymF \ S\ M"
\end{isabelle}

$M$ strongly accepts $S$ provided all infinite subsets of $M$ accept~$S$.
\begin{isabelle}
\isacommand{definition}\isanewline
\ \ \ "strongly\_accepts\ \isasymF \ S\ M\ \isasymequiv \ \isasymforall N\isasymsubseteq M.\ rejects\ \isasymF \ S\ N\ \isasymlongrightarrow \ finite\ N"
\end{isabelle}
 
\begin{isabelle}
\isacommand{definition}\ "decides\ \isasymF \ S\ M\ \isasymequiv \ rejects\ \isasymF \ S\ M\ \isasymor \ strongly\_accepts\ \isasymF \ S\ M" \end{isabelle}
 
 \begin{isabelle}
\isacommand{definition}\ "decides\_subsets\ \isasymF \ M\ \isasymequiv\isanewline
\ \ \ \ \ \ \ \ \ \ \ \isasymforall T.\ T\ \isasymsubseteq \ M\ \isasymlongrightarrow \ finite\ T\ \isasymlongrightarrow \ decides\ \isasymF \ T\ M"
 \end{isabelle}
 
 \subsection{The proofs}
 
A great many obvious facts about these primitives can be proved automatically. But there are some nontrivial properties not mentioned in the text that require elaborate proofs. A key technique in this field is called \textit{diagonalisation}, involving the construction of a sequence of infinite sets
$M_0\supseteq M_1\supseteq\cdots M_k\supseteq \cdots$ from which something can be obtained.

The following proposition states that an infinite set $M$ can be refined to an infinite $N\subseteq M$ that decides all subsets of the given finite set~$S$. The formal proof is 49 lines long and involves an inductive construction along with multiple inductive subproofs.
 \begin{isabelle}
 \isacommand{proposition}\ ex\_infinite\_decides\_finite:\isanewline
\ \ \isakeyword{assumes}\ "infinite\ M"\ "finite\ S"\isanewline
\ \ \isakeyword{obtains}\ N\ \isakeyword{where}\ "N\isasymsubseteq M"\ "infinite\ N"\ "\isasymAnd T.\ T\isasymsubseteq S\ \isasymLongrightarrow \ decides\ \isasymF \ T\ N"
 \end{isabelle}

\stevo's Lemma 1.18 states that an infinite set $M$ can be refined to an infinite $N\subseteq M$ that decides all of its finite subsets.
He notes that it follows by some ``immediate properties'' of the definitions ``and a simple diagonalisation procedure''. The formal equivalent of his one-line remark is 190 lines.
Of this, nearly 90 lines are devoted to the diagonalisation argument, including the construction of 
$M_0\supseteq M_1\supseteq\cdots M_k\supseteq \cdots$ and then the set
$\{m_0,m_1,\ldots,m_k,\ldots\}$ of the corresponding least elements;
proving that this set is the desired $N$ takes up the remaining lines. 
Perhaps a shorter formal proof could be found, given a few more hints.

\begin{isabelle}
\isacommand{proposition}\ ex\_infinite\_decides\_subsets:\isanewline
\ \ \isakeyword{assumes}\ "thin\_set\ \isasymF "\ "infinite\ M"\isanewline
\ \ \isakeyword{obtains}\ N\ \isakeyword{where}\ "N\ \isasymsubseteq \ M"\ "infinite\ N"\ "decides\_subsets\ \isasymF \ N"
\end{isabelle}

\stevo's Lemma 1.19 states that $M$ strongly accepts $\{n\}\cup S$ for all but finitely many $n$ in $M$, under the given conditions.
\begin{isabelle}
\isacommand{proposition}\ strongly\_accepts\_1\_19:\isanewline
\ \ \isakeyword{assumes}\ acc:\ "strongly\_accepts\ \isasymF \ S\ M"\isanewline
\ \ \ \ \isakeyword{and}\ "thin\_set\ \isasymF "\ "infinite\ M"\ "S\ \isasymsubseteq \ M"\ "finite\ S"\isanewline
\ \ \ \ \isakeyword{and}\ dsM:\ "decides\_subsets\ \isasymF \ M"\isanewline
\ \ \isakeyword{shows}\ "finite\ \{n\ \isasymin \ M.\ \isasymnot \ strongly\_accepts\ \isasymF \ (insert\ n\ S)\ M\}"
\end{isabelle}
He gives a four-line proof that leaves out any details. The formal equivalent is 69 lines and bears little resemblance to the original,
despite starting with the key definition\footnote{\isa{\{Sup\ S<..\}} is the set of integers greater than $\max S$}
\begin{isabelle}
\isacommand{define}\ N\ \isakeyword{where}\ "N\ =\ \{n\ \isasymin \ M.\ rejects\ \isasymF \ (insert\ n\ S)\ M\}\ \isasyminter \ \{Sup\ S<..\}"	
\end{isabelle}
The formal proof is by contradiction, and the key claim \isa{rejects\ \isasymF \ S\ N} is never established but rather its negation, from which \isa{False} is tediously squeezed.
Perhaps an expert could find a much neater proof.

Lemma 1.19 turns out to be too weak for its intended use in the sequel. The following strengthening is necessary. It yields an infinite $N\subseteq M$ such that $N$ strongly accepts $\{n\}\cup S$ for all $n$ in $N$ such that $n > \max(S)$. \stevo{} emailed a six-line proof sketch (he also helped with 1.19); the formal proof is 167 lines.
It includes a diagonalisation argument preceded by 50 lines of elaborate preamble, using Lemma~1.19.
\begin{isabelle}
\isacommand{proposition}\ strongly\_accepts\_1\_19\_plus:\isanewline
\ \ \isakeyword{assumes}\ "thin\_set\ \isasymF "\ "infinite\ M"\isanewline
\ \ \ \ \isakeyword{and}\ dsM:\ "decides\_subsets\ \isasymF \ M"\isanewline
\ \ \isakeyword{obtains}\ N\ \isakeyword{where}\ "N\ \isasymsubseteq \ M"\ "infinite\ N"\isanewline
\ \ \ "\isasymAnd S\ n.\ \isasymlbrakk S\isasymsubseteq N;\ finite\ S;\ strongly\_accepts\ \isasymF \ S\ N;\ n\isasymin N; \ S\ \isasymlless \ \{n\}\isasymrbrakk \isanewline
\ \ \ \ \ \ \ \ \ \ \ \isasymLongrightarrow \ strongly\_accepts\ \isasymF \ (insert\ n\ S)\ N"
\end{isabelle}

The proof of Nash-Williams itself is given for the case of $r = 2$ and the informal text is 11 lines long.
Thanks to this more detailed proof, the formal equivalent is only 65 lines long, shorter than those of several supposedly obvious lemmas. The straightforward generalisation to $r>0$ by induction is, at 66 lines, slightly longer but follows a standard argument.
\begin{isabelle}
\isacommand{theorem}\ Nash\_Williams\_2:\isanewline
\ \ \isakeyword{assumes}\ "thin\_set\ \isasymF "\ \isakeyword{shows}\ "Ramsey\ \isasymF \ 2"
\end{isabelle}

\subsection{A short proof, in detail}

Major formal proofs are too long to include in full, so we present a trivial proof in order to illustrate the style. Isabelle proofs are written in a structured language  containing nested scopes in which variables may be introduced along with assumptions and local definitions. Each such scope shows some explicitly-stated conclusion, perhaps establishing intermediate results along the way. 

Here is a trivial \isacommand{lemma} stating that $\{n\}\cup S$ is an initial segment of~$T$
if and only if $S$ is an initial segment of~$T$ and $n\in T$, provided $S<\{n\}$ and $n\le x$ for all $x\in T \setminus S$. The keyword \isacommand{proof} sets up both directions of the equivalence. The right-to-left direction (appearing after the keyword \isacommand{next}) is more interesting. From the right-hand side we obtain some $R$ such that $T = S\cup R$ and $S<R$, and after a few calculations, we derive the left-hand side.

\begin{isabelle}
\isacommand{lemma}\ init\_segment\_insert\_iff:\isanewline
\ \ \isakeyword{assumes}\ Sn:\ "S\ \isasymlless \ \{n\}"\ \isakeyword{and}\ TS:\ "\isasymAnd x.\ x\ \isasymin \ T-S\ \isasymLongrightarrow \ n\isasymle x"\isanewline
\ \ \isakeyword{shows}\ "init\_segment\ (insert\ n\ S)\ T\ \isasymlongleftrightarrow \ init\_segment\ S\ T\ \isasymand \ n\ \isasymin \ T"\isanewline
\isacommand{proof}\ \isanewline
\ \ \isacommand{assume}\ "init\_segment\ (insert\ n\ S)\ T"\isanewline
\ \ \isacommand{then}\ \isacommand{have}\ "init\_segment\ (\{n\}\ \isasymunion \ S)\ T"\ \isacommand{by}\ auto\isanewline
\ \ \isacommand{then}\ \isacommand{show}\ "init\_segment\ S\ T\ \isasymand \ n\ \isasymin \ T"\isanewline
\ \ \ \ \isacommand{by}\ (metis\ Sn\ Un\_iff\ init\_segment\_def\isanewline
\ \ \ \ \ \ \ \ \ \ \ \ \ \ init\_segment\_trans\ insertI1\ sup\_commute)\isanewline
\isacommand{next}\isanewline
\ \ \isacommand{assume}\ rhs:\ "init\_segment\ S\ T\ \isasymand \ n\ \isasymin \ T"\isanewline
\ \ \isacommand{then}\ \isacommand{obtain}\ R\ \isakeyword{where}\ R:\ "T\ =\ S\ \isasymunion \ R"\ "S\ \isasymlless \ R"\isanewline
\ \ \ \ \isacommand{by}\ (auto\ simp:\ init\_segment\_def\ less\_sets\_def)\isanewline
\ \ \isacommand{then}\ \isacommand{have}\ "S\isasymunion R\ =\ insert\ n\ (S\ \isasymunion \ (R-\{n\}))\ \isasymand \ insert\ n\ S\ \isasymlless \ R-\{n\}"\isanewline
\ \ \ \ \isacommand{unfolding}\ less\_sets\_def\ \isacommand{using}\ rhs\ TS\ nat\_less\_le\ \isacommand{by}\ auto\isanewline
\ \ \isacommand{then}\ \isacommand{show}\ "init\_segment\ (insert\ n\ S)\ T"\isanewline
\ \ \ \ \isacommand{using}\ R\ init\_segment\_Un\ \isacommand{by}\ force\isanewline
\isacommand{qed}	
\end{isabelle}

 Intermediate results are inserted using the keyword \isacommand{have} and existential claims with \isacommand{obtain}, while the conclusion is presented using \isacommand{show}. Justifications are introduced with~\isacommand{by}. A justification can consist of a proof method such as \isa{auto}, or a series of proof methods, or a full proof structure enclosed within the brackets \isacommand{proof} and \isacommand{qed}. Thus the various proof elements (there are many others) can be nested to any depth.

Because proofs in Isabelle's Isar language are \textit{structured}, they can be much more readable than proofs in other theorem provers.
Explicit statements of the assumptions and conclusions make structured proofs more verbose than the tactic-style proofs that predominate with other proof assistants, but infinitely more legible. The ideal is not merely to formalise mathematical results but to create a  document that makes the proof clear to the reader, where---without having to trust the software---a knowledgeable reader could decide for herself whether the claims follow from the  assumptions.  A human mathematician would not like to see an incomprehensible
 `black box' proof.  The possibility to recreate a `traditional'  proof from Isabelle code does help the user feel more at ease.  

\subsection{On the length of formal proofs}

The \textit{de Bruijn factor} \cite{wiedijk-de-bruijn} is defined as the ratio of the size of the formal mathematics to the size of the corresponding mathematical exposition. It can be regarded as measuring the cost of formalisation.

Unfortunately, it's highly inexact. Mathematical writing varies greatly in its level of detail. 
The first ever de Bruijn factor was calculated for Jutting's translation of Landau's \textit{Grundlagen der Analysis}---on the construction of the complex numbers---into AUTOMATH\@.
\begin{quotation}
  An aspect which has not been mentioned so far is the ratio between the length of pieces of AUT-QE text and the length of the corresponding German texts. Our claim at the outset was that this ratio can be kept constant. \ldots As a measure of the lengths the number of stored AUT-QE expressions \ldots{} and (rough estimates of) the number of German words.~\cite[p.\ts46]{jutting77}
\end{quotation}
The highest ratio, 6.4, is obtained for the translation of Landau's Chapter 4, in which the real numbers are constructed from the positive real numbers (in the form of Dedekind cuts, which had been defined in Chapter 3). But this is a book that devotes 173 pages to the development of complex number arithmetic from logic. The proof that $a/b=c/d \iff ad=bc$ includes two references to previous theorems about complex arithmetic. The material could be covered in 10\% of the space.

Formal proofs can also be more or less compact, the price of compactness generally being a loss of legibility. Wiedijk \cite{wiedijk-de-bruijn} has proposed to deal with some of the arbitrariness by comparing the sizes of compressed text files, but this requires retyping possibly lengthy texts into \LaTeX. He and others report de Bruijn factors in the range of 3--6, but for the Nash-Williams proof above it is 20 and upwards (crudely counting lines rather than symbols). Clearly one reason is that the source text is highly concise, only sketching out the key points. One of the lemmas (the strengthening of \stevo's~1.19) is not even stated. But also, our proof style is more prolix than is strictly necessary.

\section{Formalising Larson's proof}\label{sec:formalisation-jean}

Larson's proof \cite[pp.\ts133--140]{Jean} of Theorem \ref{larsonresult} is an intricate tour de force and the formal proof development is almost 4600 lines long. Here we can only cover its main features, focusing on a few typical constructions and some particular technical issues. We also  take a brief look at a few of the formal theorem statements.
Our objective is simply to highlight aspects of the formalisation task and the strengths and weaknesses of today's formal verification tools.

\subsection{Preliminary remarks} \label{sec:prelim-jean}

Much of the task of formalisation is simply labour: translating the definitions and arguments of the mathematical exposition into a formal language and generating proofs using the available automated methods. Particular difficulties arise when the exposition appeals to intuition or presents a construction. In the case of a construction there are two further sources of difficulties: when properties of the construction are claimed without further argument (as if the construction itself were sufficient proof), or worse, when proofs  later in the exposition depend on properties of the construction that are never even stated explicitly.

An example of appeal to intuition is when Larson constructs the set $W'$ and remarks that `without loss of generality'---a chilling phrase to the formaliser---we can regard it as the same as $W$. This turns out to mean that the bulk of the argument will be carried on using~$W$, which is simply the set of increasing sequences of integers. It is presumed obvious to the reader that none of the main lemmas could possibly be proved using~$W'$, about which little can be known. $W'$ is introduced towards the end, and the necessary adaptations to the main proof aren't that hard to figure out. But a little hint would have been helpful.

For an example of properties claimed without argument, consider the notation $i(\{x, y\})$ from Definition~\ref{def:inter}, suggesting that $i(\{x, y\})$ is uniquely determined by $x$ and~$y$. And so it turns out to be, though the formal version makes explicit its dependence on the `form' of $\{x, y\}$, namely~$l$. That $i_l(\{x, y\})$ is well-defined is perhaps obvious, since the decomposition of $x$ and $y$ into concatenations of sequences turns out to be unique, but the formal proof of this `obvious' fact is an elaborate induction, around 200 lines long. The function is also injective, which is never claimed explicitly but required for the proof of Lemma 3.6, which defines a function $g_k$ by $g_k (i(\{x, y\})) = g_k (\{x, y\})$. Proving this additional fact requires another substantial formal proof (100 lines).

For another example of the difficulty of formalisation, consider the following construction, typical of this problem domain. We are given a positive integer~$k$ and an infinite set $N = \{n_i : i < \omega\}$ of natural numbers, where the $n_i$ are an increasing enumeration of~$N$. Larson \cite[Lemma 3.7]{Jean} defines sequences $d^1$, $d^2$, \ldots, $d^m$ and $a^1_1$, $a^1_2$, \ldots, $a^1_{k+1}$, $a^2_1$, \ldots, $a^m_1$, \ldots $a^m_{k+1}$ as follows:
\begin{quote}
	Let $d^1 = (n_1,n_2, \ldots, n_{k+1}) = (d^1_1,d^1_2, \ldots, d^1_{k+1})$ and let $a^1_1$ be the sequence of the first $d^1_1$ elements of~$N$ greater than~$d^1_{k+1}$. Now suppose we have constructed $d^1$,  $a^1_1$, \ldots, $d^i$, $a^i_1$. Let $d^{i+1} = (d^{i+1}_1, \ldots, d^{i+1}_{k+1})$ be the first $k+1$ elements of~$N$ greater than the last element of~$a^i_1$, and let $a^{i+1}_1$ be the first $d^{i+1}_1$ elements of~$N$ greater than~$d^{i+1}_{k+1}$. This defines $d^1$, $d^2$, \ldots, $d^m$, $a^1_1$, $a^2_1$ \ldots, $a^m_1$. Let the rest of the sequences be defined in the order that follows, so that for any $i$ and~$j$, $a^i_j$ is the sequence of the least $(d^i_j - d^i_{j-1})$ elements of~$N$ all of which are larger than the largest element of the sequence previously defined:
	\[ (a^m_1)a^1_2, a^2_2, a^3_2, \ldots, a^m_2, a^1_3, \ldots, a^m_3,\ldots a^1_k,\ldots a^m_k, a^m_{k+1}, a^{m-1}_{k+1}, \ldots a^1_{k+1}. \]
\end{quote}
This construction is carefully crafted, particularly in the reversal at the end: $a^{m-1}_k, a^m_k, a^m_{k+1}, a^{m-1}_{k+1}$. The point is to achieve the conclusion of Larson's Lemma~3.7, namely that if $m$ and~$l$ are natural numbers with $l>0$, then there is an $m$ element set~$M$ such that for every $\{x,y\}\subseteq M$, $\{x,y\}$ has form~$l$ and $i(\{x,y\})\subseteq N$.

It turns out that if $l=2k-1$ then we can put
\[ M = \{a^i_1 * a^i_2 * \cdots * a^i_k : 1\le i\le m\}, \]
while if $l=2k$ then we can put
\[ M = \{a^i_1 * a^i_2 * \cdots * a^i_{k+1} : 1\le i\le m\}. \]
The reversal noted above turns out to be crucial to the second case, where $x$ is the concatenation of $k+1$ segments while $y$ is the concatenation of only~$k$:
the last segment of $y$ has the form $a^i_k * a^i_{k+1}$, and this meets all the requirements of Definition~\ref{def:inter} above.

 The formalisation of such a construction amounts to writing a tiny but delicately crafted computer program.\footnote{Which is even executable, if $N$ is effectively enumerable.}
 Significant effort is needed to prove fairly obvious properties of this construction, such as that the $a^i_j$ are nonempty, or that $a^{i'}_{j'} < a^i_j$ if $j'<j\le k$ and $i'$, $i<m$. These are immediate by construction since elements are drawn from the set $N$ in increasing order. But the formal proofs require fully worked out inductions.

 A similar situation arises with Larson's Lemma 3.8 \cite[p.\ts139]{Jean}. She defines three collections of sequences:
\[ \{d^j: 0<j<\omega\}, \quad \{a_j: 0<j<\omega\}, \quad \{b(i,j,k): 1\le i\le j\le k <\omega\}. \]
The ordering constraints guarantee that the set
\[ \{a_j * b(1,j,k_1) * b(2,j,k_2) * \cdots * b(j,j,k_j) : j < k_1 < k_2 < \cdots < k_j <\omega\} \]
has order type~$\omega^j$, and the union over~$j$ of these has order type~$\omega^\omega$.
These order type claims seem plausible, but the text contains no hint of how to prove them. Relevant is that the construction ensures that for $j$ and $k$ with $1\le j \le i \le k$, the sequence $b(i,j,k)$ has length $d^j_i-d^j_{i-1}$ and is therefore independent of~$k$.
For the $\omega^j$ claim, an induction on $j$ seems to be indicated; our formal proof is nearly 300 lines long, including 200 lines of auxiliary definitions and lemmas, yet a much shorter proof may exist.

It's finally time to look at the formalisation itself.
We do not present actual proofs---they are long and not especially intelligible---nor even all of the numerous definitions and technical lemmas required for the formalisation.

\subsection{Preliminary definitions and results}

We begin with something simple: the set~$W$, which is written~\isa{WW} and is the set of all strictly sorted (increasing) sequences of natural numbers. We do not work with~$W'$ except in the body of the main theorem.
\begin{isabelle}
\isacommand{definition}\ WW\ ::\ "nat\ list\ set"\isanewline
\ \ \isakeyword{where}\ "WW\ \isasymequiv \ \isacharbraceleft l.\ strict\_sorted\ l\isacharbraceright "
\end{isabelle}
Type \isa{nat~list set} is the type of sets of lists (sequences) of natural numbers.

Next comes the notion of an interaction scheme, Def.\ts\ref{def:inter} above. We need to build up to this. First, more basic Isabelle/HOL definitions:
\begin{itemize}
	\item \isa{length~l} is $\bars l$, the length of~$l$
	\item \isa{x\#l} is the list consisting of~\isa{l} prefixed with~\isa{x} as its first element
	\item \isa{u@v} is the concatenation of lists \isa{u} and~\isa{v}, like Larson's $u*v$
	\item \isa{List.set} maps a list to the corresponding finite set
\end{itemize}
The function \isa{acc\_lengths} formalises the accumulation of the lengths of lists for the variables $c$ and $d$ there. The integer argument \isa{acc} is a necessary generalisation for the sake of the recursion but will initially be zero.
\begin{isabelle}
\isacommand{fun}\ acc\_lengths\ ::\ "nat\ \isasymRightarrow \ 'a\ list\ list\ \isasymRightarrow \ nat\ list"\isanewline
\ \ \isakeyword{where}\ "acc\_lengths\ acc\ []\ =\ []"\isanewline
\ \ \ \ \ \ |\ "acc\_lengths\ acc\ (l\#ls)\ =\isanewline
\ \ \ \ \ \ \ \ \ \ \ \ \ \ (acc\ +\ length\ l)\ \#\ acc\_lengths\ (acc\ +\ length\ l)\ ls"
\end{isabelle}
Many trivial properties of \isa{acc\_lengths} must be proved. Here \isa{lists(-\isacharbraceleft []\isacharbraceright )} denotes the set of lists of nonempty lists, and the claim is that \isa{acc\_lengths} yields an element of~$W$.
\begin{isabelle}
\isacommand{lemma}\ strict\_sorted\_acc\_lengths:\isanewline
\ \ \isakeyword{assumes}\ "ls\ \isasymin \ lists\ (-\ \isacharbraceleft []\isacharbraceright )"\isanewline
\ \ \isakeyword{shows}\ "strict\_sorted\ (acc\_lengths\ acc\ ls)"
\end{isabelle}

The built-in function \isa{concat} joins a list of lists. But we also need a function to concatenate two lists of lists, interleaving corresponding elements:
\begin{isabelle}
\isacommand{fun}\ interact\ ::\ "'a\ list\ list\ \isasymRightarrow \ 'a\ list\ list\ \isasymRightarrow \ 'a\ list"\isanewline
\ \ \isakeyword{where} "interact\ []\ ys\ =\ concat\ ys"\isanewline
\ \ \ \ \ \ \ |\ "interact\ xs\ []\ =\ concat\ xs"\isanewline
\ \ \ \ \ \ \ |\ "interact\ (x\#xs)\ (y\#ys)\ =\ x\ @\ y\ @\ interact\ xs\ ys"
\end{isabelle}

We are finally ready to define interaction schemes. We split up the long list of conditions in Def.\ts\ref{def:inter} as two inductive definitions, although there is no actual induction: this form of definition works well when there are additional variables and conditions, which would otherwise have to be expressed by a big existentially quantified conjunction.
Here \isa{xs} and \isa{ys} are the $x$ and~$y$ of the definition, \isa{ka} and \isa{kb} are the lengths of the $a$-lists and $b$-lists, and \isa{zs} is the interaction scheme.
\begin{isabelle}
\isacommand{inductive}\ Form\_Body\ ::\ "[nat,nat,nat\ list,nat\ list,nat\ list]\ \isasymRightarrow \ bool"\isanewline
\ \ \isakeyword{where}\ "Form\_Body\ ka\ kb\ xs\ ys\ zs"\isanewline
\ \ \isakeyword{if}\ "length\ xs\ <\ length\ ys"\ "xs\ =\ concat\ (a\#as)"\ "ys\ =\ concat\ (b\#bs)"\isanewline
\ \ \ \ \ \ \ \ \ \ "a\#as\ \isasymin \ lists\ (-\ \isacharbraceleft []\isacharbraceright )"\ "b\#bs\ \isasymin \ lists\ (-\ \isacharbraceleft []\isacharbraceright )"\isanewline
\ \ \ \ \ \ \ \ \ \ "length\ (a\#as)\ =\ ka"\ "length\ (b\#bs)\ =\ kb"\isanewline
\ \ \ \ \ \ \ \ \ \ "c\ =\ acc\_lengths\ 0\ (a\#as)"\isanewline
\ \ \ \ \ \ \ \ \ \ "d\ =\ acc\_lengths\ 0\ (b\#bs)"\isanewline
\ \ \ \ \ \ \ \ \ \ "zs\ =\ concat\ [c,\ a,\ d,\ b]\ @\ interact\ as\ bs"\isanewline
\ \ \ \ \ \ \ \ \ \ "strict\_sorted\ zs"
\end{isabelle}

The following definition allows us to write \isa{Form~l~U} to express that~$U$ has form~$l$.
Even and odd forms are treated differently; if $l=2k$ for $k>0$, the numeric parameters of \isa{Form\_Body} are $k+1$ and~$k$. A naive treatment of the definition would force a lot of proofs to be written out twice. The two cases, while not identical, are similar enough to be treated uniformly in terms of the more general \isa{Form\_Body\ ka\ kb}.
\begin{isabelle}
\isacommand{inductive}\ Form\ ::\ "[nat,\ nat\ list\ set]\ \isasymRightarrow \ bool"\isanewline
\ \ \isakeyword{where}\ "Form\ 0\ \isacharbraceleft xs,ys\isacharbraceright "\ \isakeyword{if}\ "length\ xs\ =\ length\ ys"\ "xs\ \isasymnoteq \ ys"\isanewline
\ \ \ \ \ \ |\ "Form\ (2*k-1)\ \isacharbraceleft xs,ys\isacharbraceright "\ \isakeyword{if}\ "Form\_Body\ k\ k\ xs\ ys\ zs"\ "k>0"\isanewline
\ \ \ \ \ \ |\ "Form\ (2*k)\ \ \ \isacharbraceleft xs,ys\isacharbraceright "\ \isakeyword{if}\ "Form\_Body\ts(Suc\ k)\ k\ xs\ ys\ zs"\ "k>0"
\end{isabelle}
Finally we can define the interaction scheme itself, writing \isa{inter\_scheme~k~U} for~$i_k(U)$.
Recall that \isa{SOME} denotes Hilbert's epsilon; the \isa{zs} mentioned below 
is actually unique.
\begin{isabelle}
\isacommand{definition}\ inter\_scheme\ ::\ "nat\ \isasymRightarrow \ nat\ list\ set\ \isasymRightarrow \ nat\ list"\isanewline
\ \ \isakeyword{where}\ "inter\_scheme\ l\ U\ \isasymequiv\isanewline
\ \ \ \ \ SOME zs.\ \isasymexists k\ xs\ ys.\ l\ >\ 0\isanewline
\ \ \ \ \ \ \ \ \ \ \ \isasymand \ (l\ =\ 2*k-1\ \isasymand \ U\ =\ \isacharbraceleft xs,ys\isacharbraceright \ \isasymand \ Form\_Body\ k\ k\ xs\ ys\ zs\isanewline
\ \ \ \ \ \ \ \ \ \ \ \ \isasymor \ l\ =\ 2*k\ \isasymand \ U\ =\ \isacharbraceleft xs,ys\isacharbraceright \ \isasymand \ Form\_Body\ts(Suc\ k)\ k\ xs\ ys\ zs)"
\end{isabelle}
It turns out to be injective in the following sense, for two sets $U$ and $U'$ that have the same form $l>0$.
The proof is a painstaking reversal of the steps shown in Def.\ts\ref{def:inter} and is about 50 lines long.
\begin{isabelle}
\isacommand{proposition}\ inter\_scheme\_injective:\isanewline
\ \ \isakeyword{assumes}\ "Form\ l\ U"\ "Form\ l\ U'"\ "l\ >\ 0"\isanewline
\ \ \ \ \ \ \ \ \ \ \ "inter\_scheme\ l\ U'\ =\ inter\_scheme\ l\ U"\isanewline
\ \ \isakeyword{shows}\ "U'\ =\ U"
\end{isabelle}

A considerable effort is needed to show that the interaction scheme is defined uniquely. The following lemma takes a long set of assumptions derived from the possibility of two distinct interaction schemes and shows that the $a$-lists and $b$-lists necessarily coincide.  The proof is by induction on the length of~\isa{as}.
\begin{isabelle}
\isacommand{proposition}\ interaction\_scheme\_unique\_aux:\isanewline
\ \ \isakeyword{assumes}\ "concat\ as\ =\ concat\ as'"\ "concat\ bs\ =\ concat\ bs'"\isanewline
\ \ \ \ \ \ \isakeyword{and}\ "as\ \isasymin \ lists\ (-\ \isacharbraceleft []\isacharbraceright )"\ "bs\ \isasymin \ lists\ (-\ \isacharbraceleft []\isacharbraceright )"\isanewline
\ \ \ \ \ \ \isakeyword{and}\ "strict\_sorted\ (interact\ as\ bs)"\isanewline
\ \ \ \ \ \ \isakeyword{and}\ "length\ bs\ \isasymle \ length\ as"\ "length\ as\ \isasymle \ Suc\ (length\ bs)"\isanewline
\ \ \ \ \ \ \isakeyword{and}\ "as'\ \isasymin \ lists\ (-\ \isacharbraceleft []\isacharbraceright )"\ "bs'\ \isasymin \ lists\ (-\ \isacharbraceleft []\isacharbraceright )"\isanewline
\ \ \ \ \ \ \isakeyword{and}\ "strict\_sorted\ (interact\ as'\ bs')"\isanewline
\ \ \ \ \ \ \isakeyword{and}\ "length\ bs'\ \isasymle \ length\ as'"\ "length\ as'\ \isasymle Suc\ (length\ bs')"\isanewline
\ \ \ \ \ \ \isakeyword{and}\ "length\ as\ =\ length\ as'"\ "length\ bs\ =\ length\ bs'"\isanewline
\ \ \isakeyword{shows}\ "as\ =\ as'\ \isasymand \ bs\ =\ bs'"
\end{isabelle}

It is now fairly straightforward to show that the first four arguments of \isa{Form\_Body} determine the fifth, which is the interaction scheme. The inequality $\isa{kb}\le\isa{ka}\le\isa{kb}+1$ eliminates the need to treat the cases of even and odd forms separately.
\begin{isabelle}
\isacommand{proposition}\ Form\_Body\_unique:\isanewline
\ \ \isakeyword{assumes}\ "Form\_Body\ ka\ kb\ xs\ ys\ zs"\ "Form\_Body\ ka\ kb\ xs\ ys\ zs'"\isanewline
\ \ \ \ \ \ \isakeyword{and}\ "kb\ \isasymle \ ka"\ "ka\ \isasymle \ Suc\ kb"\isanewline
\ \ \isakeyword{shows}\ "zs'\ =\ zs"
\end{isabelle}

And so we find that the interaction scheme is uniquely defined for every valid instance of the predicate \isa{Form\_Body}. The full proof of this more-or-less obvious statement, for which Larson~\cite{Jean} gives no justification, is longer than 240 lines.
\begin{isabelle}
\isacommand{lemma}\ Form\_Body\_imp\_inter\_scheme:\isanewline
\ \ \isakeyword{assumes}\ "Form\_Body\ ka\ kb\ xs\ ys\ zs"\ "0<kb"\ "kb\isasymle ka"\ "ka\ \isasymle \ Suc\ kb"\isanewline
\ \ \isakeyword{shows}\ "zs\ =\ inter\_scheme\ ((ka+kb)\ -\ 1)\ \isacharbraceleft xs,ys\isacharbraceright "
\end{isabelle}

\subsection{Major lemmas}

The material presented thus far (plus much else not presented) serves to make sense of Larson's definitions. Now we turn to the results that make up the proof of her main result, which we sketched in Sect.\ts\ref{sec:larsonresult} above. But first, we need some Isabelle notation:
\begin{itemize}
	\item $[A]^k$, the set of all $k$-element subsets of~$A$ is \isa{[A]\isactrlbsup k\isactrlesup}
	\item $n_k$, the $k^{\text{th}}$ element of the infinite set~$N$, is \isa{enum\ N\ k}
	\item Larson's $a<b$ for ordered lists is simply \isa{a < b} in Isabelle
	\item  $(n_k) < A \subseteq N$ is the conjunction of \isa{[enum\ N\ k] < A} and \isa{A\ \isasymsubseteq \ N}.
\end{itemize}

An inductive definition expresses initial segments in terms of list concatenation, yielding
a definition of \isa{thin} sets.%
\footnote{Conceptually the same as \isa{thin\_set}, it is a property of sets of lists, not sets of sets.}
\begin{isabelle}
\isacommand{inductive}\ initial\_segment\ ::\ "'a\ list\ \isasymRightarrow \ 'a\ list\ \isasymRightarrow \ bool"\isanewline
\ \ \isakeyword{where}\ "initial\_segment\ xs\ (xs@ys)"\isanewline
\isanewline
\isacommand{definition}\ thin\ \isakeyword{where}\isanewline
\ \ "thin\ A\ \isasymequiv \ \isasymnot \ (\isasymexists x\ y.\ x\isasymin A\ \isasymand \ y\isasymin A\ \isasymand \ x\isasymnoteq y\ \isasymand \ initial\_segment\ x\ y)"
\end{isabelle}

The first of Larson's technical lemmas \cite[Lemma~3.11]{Jean} states that the set of interaction schemes $\{i_l(U) \mid U \text { has form } l \}$ is thin for $l>0$.
This set is formalised using the image operator (\isa{`}). The proof involves deriving a contradiction from the existence of $U$ and~$U'$ with distinct interaction schemes, one an initial segment of another. As with previous results, the proof involves breaking things down according to Def.\ts\ref{def:inter}, fairly straightforwardly (under 75 lines).
\begin{isabelle}
\isacommand{lemma}\ lemma\_3\_11:\isanewline
\ \ \isakeyword{assumes}\ "l\ >\ 0"\isanewline
\ \ \isakeyword{shows}\ "thin\ (inter\_scheme\ l\ `\ \isacharbraceleft U.\ Form\ l\ U\isacharbraceright )"
\end{isabelle}

The next step in Larson's development~\cite[Lemma~3.6]{Jean} is proved in 150 lines from the original 11-line text. It uses the Nash-Williams partition theorem to obtain an infinite set~$N$ and a sequence ${j_k}$ such that if $k>0$, $U$ has form $k$ and $(n_k) < i(U) \subseteq N$, then $g(U)=j_k$.
\begin{isabelle}
\isacommand{proposition}\ lemma\_3\_6:\isanewline
\ \ \isakeyword{fixes}\ g::\ "nat list set \isasymRightarrow \ nat"\isanewline
\ \ \isakeyword{assumes}\ g:\ "g\ \isasymin \ [WW]\isactrlbsup 2\isactrlesup \ \isasymrightarrow \ \isacharbraceleft 0,1\isacharbraceright "\isanewline
\ \ \isakeyword{obtains}\ N\ j\ \isakeyword{where}\ "infinite\ N"\isanewline
\ \ \ \ \isakeyword{and}\ "\isasymAnd k\ U.\ \isasymlbrakk k\ >\ 0;\ U\ \isasymin \ [WW]\isactrlbsup 2\isactrlesup ;\ Form\ k\ U;\isanewline
\ \ \ \ \ \ \ \ \ \ \ \ \ \ \ \ \ [enum\ N\ k]\ <\ inter\_scheme\ k\ U;\isanewline
\ \ \ \ \ \ \ \ \ \ \ \ \ \ \ \ \ \ List.set\ (inter\_scheme\ k\ U)\ \isasymsubseteq \ N\isasymrbrakk \ \isasymLongrightarrow \ g\ U\ =\ j\ k"
\end{isabelle}

Larson next proves that for every infinite set~$N$ and every $m$, $l<\omega$ with $l>0$, there is an $m$~element set~$M$ such that $M\subseteq W$ (necessary but omitted in the text)
and for every $\{x,y\} \subseteq M$, $\{x,y\}$ has form~$l$ and $i(\{x,y\})\subseteq N$~\cite[Lemma~3.7]{Jean}.
In the proof of the main theorem, the $N$ above is derived from the one obtained from the previous lemma. The formalisation of this one-page proof takes nearly 900 lines, including some preparatory lemmas. About 240 of those lines are devoted to establishing the basic properties of the sequences $d^1$, \ldots, $d^m$ and $a^1_1$, $a^1_2$, \ldots, $a^1_{k+1}$, $\ldots$, $a^m_1$, \ldots $a^m_{k+1}$ outlined in Sect.\ts\ref{sec:prelim-jean} above. About 130 lines were devoted to the degenerate cases $l=1$ and $l=2$, which needed to be treated separately.
\begin{isabelle}
\isacommand{proposition}\ lemma\_3\_7:\isanewline
\ \ \isakeyword{assumes}\ "infinite\ N"\ "l\ >\ 0"\isanewline
\ \ \isakeyword{obtains}\ M\ \isakeyword{where}\ "M\ \isasymin \ [WW]\isactrlbsup m\isactrlesup "\isanewline
\ \ \ \ \isakeyword{and}\ "\isasymAnd U.\ U\isasymin [M]\isactrlbsup 2\isactrlesup \ \isasymLongrightarrow \ Form\ l\ U\ \isasymand \ List.set(inter\_scheme\ l\ U)\ \isasymsubseteq \ N"
\end{isabelle}

Larson's next result states that for every infinite set~$N$, there is a set $X\subseteq W$ of order type~$\omega^\omega$ such that for any $\{x,y\} \subseteq X$, there is an $l$ such that $\{x,y\}$ has form~$l$ and if $l>0$ then $[n_l] < i(\{x,y\}) \subseteq N$. Her proof is slightly longer than a page and the full formalisation is about 1700 lines long. Of this, approximately 400 concern the construction and properties of the sequences, with a further 400 for the order type calculation and about 600 lines to formalise the last paragraph of the proof (nine lines of text). Recall that \isa{[X]\isactrlbsup 2\isactrlesup} denotes the set of all two element subsets of~$X$.
\begin{isabelle}
\isacommand{proposition}\ lemma\_3\_8:\isanewline
\ \ \isakeyword{assumes}\ "infinite\ N"\isanewline
\ \ \isakeyword{obtains}\ X\ \isakeyword{where}\ "X\ \isasymsubseteq \ WW"\ "ordertype\ X\ (lenlex\ less\_than)\ =\ \isasymomega \isasymup \isasymomega "\isanewline
\ \ \ \ \isakeyword{and}\ "\isasymAnd U.\ U\ \isasymin \ [X]\isactrlbsup 2\isactrlesup \ \isasymLongrightarrow \ \isasymexists l.\ Form\ l\ U\ \isasymand \isanewline
\ \ \ \ \ \ \ \ \ \ \ \ \ \ \ \ \ \ \ \ \ \ \ \ (l>0\ \isasymlongrightarrow \ [enum\ N\ l]\ <\ inter\_scheme\ l\ U\ \isasymand \isanewline
\ \ \ \ \ \ \ \ \ \ \ \ \ \ \ \ \ \ \ \ \ \ \ \ \ \ \ \ \ \ \ \ \ \ List.set\ (inter\_scheme\ l\ U)\ \isasymsubseteq \ N)"
\end{isabelle}

The lemmas described above constitute the main body of Larson's development. Building on them, the main theorem can be formally proved with just 360 lines of code. The formulation below---in terms of the lexicographic ordering on~$W$---trivially leads to the standard formulation in terms of the ordinal $\omega^\omega$. 

To state the main theorem, we use the Isabelle definition of a more general partition relation, $\beta\arrows (\alpha_1,\ldots, \alpha_k)^n$, which is concerned with
$n$-element subsets of~$\beta$ rather than only pairs and allows $k$ colours rather than two.
In the Isabelle version, \isa{B} is any set, \isa{r} is a well-founded relation on~\isa{B} used for order types and \isa{\isasymalpha} is a list of ordinals. 
The expression \isa{\{..<length\ \isasymalpha \}} is the set $\{0,\ldots, k-1\}$ while \isa{f\ `\ ([H]\isactrlbsup n\isactrlesup)\ \isasymsubseteq \ {i}} expresses that every element of $[H]^n$ has the colour~$i$.
\begin{isabelle}
\isacommand{definition}\ partn\_lst\ \isakeyword{where}\isanewline
\ \ "partn\_lst\ r\ B\ \isasymalpha \ n\ \isasymequiv\isanewline
\ \ \ \  \ \isasymforall f\ \isasymin \ [B]\isactrlbsup n\isactrlesup\ \ \isasymrightarrow \ \ \{..<length\ \isasymalpha \}.\isanewline
\ \ \ \ \ \ \ \ \isasymexists i\ <\ length\ \isasymalpha .\ \isasymexists H.\ H\ \isasymsubseteq \ B\ \isasymand \ ordertype\ H\ r\ =\ (\isasymalpha !i)\ \isasymand\isanewline
\ \ \ \ \ \ \ \ \ \ \ \ \ \ \ \ \ \ \ \ \ \ \ \ \ \ \ \ \ f\ `\ ([H]\isactrlbsup n\isactrlesup)\ \isasymsubseteq \ {i}"
\end{isabelle}
The proof begins by assuming a partition $f$ of the set $[W]^2$ such that there is no $m$-element set $M$ for which $f([M]^2) =\{1\}$. 
It takes about 200 lines to construct the set~$W'$ and to `replace' $W$ by~$W'$; the trick is to find a length-preserving order isomorphism between the two. The result of this work is a partition~$f'$ of $[W]^2$ that assigns the colour 0 to all pairs of sequences in $W$ of the same length, a condition necessary to make the proof go through.
The remainder of the proof, approximately 90 lines, completes the argument using the previously-proved lemmas and with no mention of~$W'$.

\begin{isabelle}
\isacommand{theorem}\ partition\_\isasymomega \isasymomega \_aux:\isanewline
\ \ \isakeyword{assumes}\ "\isasymalpha \ \isasymin \ elts\ \isasymomega "\isanewline
\ \ \isakeyword{shows}\ \ \ "partn\_lst\ (lenlex\ less\_than)\ WW\ [\isasymomega \isasymup \isasymomega ,\isasymalpha ]\ 2"\
\end{isabelle}

\subsection{On some tricky spots in the proofs}

The most frustrating aspect of formalisation is the need to spell out the proofs of obvious statements. We could not escape this phenomenon, and discuss a few examples below, some of them positive.

The function \isa{interact}, defined above, concatenates alternating elements of two lists. A key property is that if the two lists satisfy the constraints given in the definition of a form (Def.\ts\ref{def:inter}), then the result of \isa{interact} will be strictly ordered (and therefore in~$W$). The proof is a messy induction and we would like to state the theorem in the simplest possible way.

Fortunately, Isabelle/HOL provides counterexample finding tools: Nitpick~\cite{blanchette-nitpick} and Quickcheck~\cite{bulwahn-quickcheck}.
Their purpose is to identify invalid conjectures before any time is wasted in proof attempts.
Nitpick works by abstracting the conjecture to a propositional formula and attempting to find a model with the help of a satisfiability checker, while Quickcheck simply tries to evaluate the conjecture at intelligently chosen values. Both need the conjecture to be computational, in a broad sense. As much of our work here is concerned with finite sets or sequences of integers, these tools can be effective.

We were able to use Nitpick to formulate this theorem correctly, including nuances such as \isa{Suc\ n\ <\ length\ xs}, while omitting irrelevant conditions.
\begin{isabelle}
\isacommand{lemma}\ strict\_sorted\_interact\_I:\isanewline
\ \ \isakeyword{assumes}\ "length\ ys\ \isasymle \ length\ xs"\ "length\ xs\ \isasymle \ Suc\ (length\ ys)"\isanewline
\ \ \ \ "\isasymAnd x.\ x\ \isasymin \ list.set\ xs\ \isasymLongrightarrow \ strict\_sorted\ x"\isanewline
\ \ \ \ "\isasymAnd y.\ y\ \isasymin \ list.set\ ys\ \isasymLongrightarrow \ strict\_sorted\ y"\isanewline
\ \ \ \ "\isasymAnd n.\ n\ <\ length\ ys\ \isasymLongrightarrow \ xs!n\ <\ ys!n"\isanewline
\ \ \ \ "\isasymAnd n.\ Suc\ n\ <\ length\ xs\ \isasymLongrightarrow \ ys!n\ <\ xs!Suc\ n"\isanewline
\ \ \ \ "xs\ \isasymin \ lists\ (-\ \isacharbraceleft []\isacharbraceright )"\ "ys\ \isasymin \ lists\ (-\ \isacharbraceleft []\isacharbraceright )"\isanewline
\ \ \isakeyword{shows}\ "strict\_sorted\ (interact\ xs\ ys)"
\end{isabelle}

There is another challenge at the very end of Lemma 3.8 \cite[p.\ts139]{Jean}, when Larson constructs families of sequences satisfying the conditions of a form for a given pair $\{x,y\}$:
\begin{align*}
x &= \{a_j * b(1,j,k_1) * \cdots * b(j,j,k_j)\} \\
y &= \{a_r * b(1,r,p_1) * \cdots * b(r,r,p_r)\}.
\end{align*}
Here we know that $a_j<a_r$, hence $j<r$ and this turns out to guarantee the disjointness of all the segments shown. Now Larson~\cite[p.\ts140]{Jean} remarks, `for some $l<2j$, $\{x,y\}$ has form~$l$.'
Referring to the definition of form, this again is clear: the sequences for $x$ and~$y$ need to be interleaved in strict order, which may force adjacent sequences in the expression above to be concatenated. The form can be as small as 1, if $x<y$, when all the sequences get concatenated; it can be as high as $2j+1$
if no concatenations occur for~$x$, as in this example:%
\footnote{Thus it seems that Larson should have written $l<2(j+1)$. This seems to be the only error in her paper.}
\[ a_j<a_r<b(1,j,k_1)<b(1,r,p_1)<b(2,j,k_2)<\cdots <b(j,r,p_j) * \cdots * b(r,r,p_r).
\]
To ask for a justification of this obvious claim would be unreasonable. And yet to formalise the process of examining the interleavings of these sequences and arranging them so 
as to satisfy the conditions of Definition~\ref{def:inter}---so that those conditions can be \emph{proved}---turned out to require weeks of work.

Our first attempt involved the following function, which took as arguments a sequence of sequences coupled with a set~$B$, which in practice would contain the elements of the opposite sequence. The idea was to concatenate consecutive subsequences unless some element of~$B$ separated them.
\begin{isabelle}
\isakeyword{fun}\ coalesce\isanewline
\ \ \isakeyword{where}\ "coalesce [] B = []"\isanewline
\ \ \ \ \ \ \ | "coalesce [a] B = [a]"\isanewline
\ \ \ \ \ \ \ | "coalesce (a1\#a2\#as) B =\isanewline
\ \ \ \ \ \ \ \ \ \ (if \isasymexists y\isasymin B. a1 < [y] \isasymand\ [y] < a2 \isanewline
\ \ \ \ \ \ \ \ \ \ \ then a1 \# coalesce (a2\#as) B else coalesce ((a1@a2)\#as) B)"
\end{isabelle}
Nearly all the necessary properties could be proved easily, but there seemed to be no way to show that the resulting interaction scheme (obtained by applying \isa{coalesce} to both sequences of sequences) was correctly ordered. Thanks to the counterexample finder, many conjectures could be rejected without attempting a proof.

Therefore \isa{coalesce} was abandoned in favour of the following predicate, which deals with both sequences of sequences simultaneously, considering each of them cut into two parts (the arguments \isa{as1@as2} and \isa{bs1@bs2} represent arbitrary cut points for them both).
Then the two leading parts, \isa{as1} and \isa{bs1}, are concatenated provided all the ordering properties are satisfied.
\begin{isabelle}
\isacommand{inductive}\ merge\isanewline
\ \ \isakeyword{where}\ NullNull:\ "merge\ []\ []\ []\ []"\isanewline
\ \ \ \ \ \ |\ Null:\ "as1\ \isasymnoteq \ []\ \isasymLongrightarrow \ merge\ as\ []\ (concat\ as)\ []"\isanewline
\ \ \ \ \ \ |\ App:\ "\isasymlbrakk as1\ \isasymnoteq \ [];\ bs1\ \isasymnoteq \ [];\isanewline
\ \ \ \ \ \ \ \ \ \ \ \ \ \ \ concat\ as1\ <\ concat\ bs1;\ concat\ bs1\ <\ concat\ as2;\isanewline
\ \ \ \ \ \ \ \ \ \ \ \ \ \ \ merge\ as2\ bs2\ as\ bs\isasymrbrakk \isanewline
\ \ \ \
\isasymLongrightarrow \ merge\ (as1@as2)\ (bs1@bs2)\ (concat\ as1\ \#\ as)\ (concat\ bs1\ \#\ bs)"
\end{isabelle}
The conditions \isa{concat\ as1\ <\ concat\ bs1} and \isa{concat\ bs1\ <\ concat\ as2} ensure that the elements of \isa{concat\ bs1} lie between the two halves of the first sequence. Thus, just enough of the $a$-sequence is taken so that it lies before the start of the $b$-sequence, from which just enough elements are taken to allow the $a$-sequence to resume.
This formulation avoids any direct expression of iteration or computation (the root of the problems with \isa{coalesce}) in favour of writing the $a$ and $b$-sequences as each divided at an arbitrary point, the rule applying only subject to the ordering constraints shown.

With this approach, most of the required properties are shown easily enough. The most difficult is the following statement, which expresses that any two sequences---subject to certain conditions---can be successfully merged. Again, counterexample checking was crucial to find the simplest formulation of the necessary conditions. The proof is by induction on the sums of the lengths of \isa{as} and~\isa{bs}.
\begin{isabelle}
\isacommand{proposition}\ merge\_exists:\isanewline
\ \ \isakeyword{assumes}\ "strict\_sorted\ (concat\ as)"\ "strict\_sorted\ (concat\ bs)"\isanewline
\ \ \ \ \ \ \ \ \ \ "as\ \isasymin \ lists\ (-\ \isacharbraceleft []\isacharbraceright )"\ "bs\ \isasymin \ lists\ (-\ \isacharbraceleft []\isacharbraceright )"\isanewline
\ \ \ \ \ \ \ \ \ \ "hd\ as\ <\ hd\ bs"\ "as\ \isasymnoteq \ []"\ "bs\ \isasymnoteq \ []"\isanewline
\ \ \isakeyword{and}\ disj:\ "\isasymAnd a\ b.\ \isasymlbrakk a\ \isasymin \ list.set\ as;\ b\ \isasymin \ list.set\ bs\isasymrbrakk \ \isasymLongrightarrow \ a<b\ \isasymor \ b<a"\isanewline
\isakeyword{shows}\ "\isasymexists us\ vs.\ merge\ as\ bs\ us\ vs"
\end{isabelle}

\subsection{Final remarks on Larson's proof}


The formalisation of Larson's proof of $\omega^\omega\arrows(\omega^\omega, m)$ took approximately six months. This includes a month and a half spent formalising \erd–Milner~\cite{erdos-theorem-partition} and half a month proving the Nash-Williams partition theorem. Her Lemma~3.8 required two months, 11 days of which were devoted to the order type proof mentioned in Sect.\ts\ref{sec:prelim-jean} above. The remaining two months were devoted to Lemma~3.7 and the main theorem. Due to COVID-19, most of the work was undertaken at home, not the best environment for doing mathematics.

Having looked at Larson's work in excruciating detail for months, we can only be impressed by the intricacy, delicacy and fragility of her constructions and wonder how she kept so many details in mind.
She deserves her reputation of being careful and clear. Although formalisation efforts regularly identify flaws in mathematical exposition, we found no serious errors in hers.
Her narrative proof is seven pages long \cite[p.\ts133--140]{Jean} and the corresponding formalisation is some 4600 lines, not counting prerequisites such as Nash-Williams and Erd\H{o}s--Milner, which are proved elsewhere. Estimating 30 lines per page, this suggests a de Bruijn factor of roughly~23.

\section{Conclusion}\label{sec:conclusion}

Our work shows that ordinal partition theory is clearly formalisable within Isabelle/HOL augmented with a straightforward axiomatisation of ZFC. We have found no serious errors in the original mathematical material, and although we struggled in some places it is quite hard to fault  Larson's exposition~\cite{Jean} beyond noting that a few hints here and there could have saved us quite a bit of effort. The reader of a mathematical proof is expected to invest much thought. 

As usual, a concern is the disproportionate effort needed to prove some simple observations. The inductive constructions of sequences that appear in Larson's Lemmas~3.7 and~3.8 \cite{Jean} must surely be regarded as straightforward and yet we  struggled to find the right language in which to express them and derive their obvious properties. The same can be said of the order type calculation in~3.8. It's also unfortunate that the degenerate cases $l=1$ and $l=2$ in~3.7 required so much work. However, given the inherent complexity of the subject matter, it's reassuring to know that the entire development has been checked formally, with a proof text~\cite{Ordinal_Partitions-AFP} that is available for inspection or automated analysis.

This case study also demonstrates the diversity of mathematical topics that can be formalised in Isabelle/HOL: we have formalised material that is light years away from what is usually formalised.

Ordinal partition relations seem to be at the same time formalisable in Isabelle/HOL and at a point of their mathematical development where human advances seem rare and not forthcoming. None of the high power techniques of set theory and model theory such as large cardinals, forcing, pcf or elementary submodes seem to be relevant. Therefore, we hope that some advances in this subject might be obtained by automatisation. However, we remain with a humble conviction that doing enough preparatory work with Isabelle to be able to produce such results will require a considerable intellectual effort.

\paragraph*{Acknowledgements.}
 Angeliki Koutsoukou-Argyraki and Lawrence C. Paulson thank the ERC for their support through the Advanced Grant ALEXANDRIA (Project GA 742178). Mirna D{\v z}amonja's  research was supported by the GA{\v C}R project EXPRO 20-31529X and RVO: 67985840 at the Czech Academy of Sciences; she received funding from the European's Union Horizon 2020 research and innovation programme under the Maria Sko{\l}odowska-Curie grant agreement No 1010232. All three authors thank the London Mathematical Society for support through their Grant SC7-1920-11. Thanks to Stevo \stevo{} for advice
 and to the anonymous reviewers for their helpful feedback on the first
 submitted version of this paper.

\bibliographystyle{plain}  
\bibliography{Mirnamain}  
\end{document}